\numberwithin{equation}{section}
\theoremstyle{plain}
\newtheorem{Th}{Theorem}[section]
\newtheorem{Lemma}[Th]{Lemma}
\newtheorem{Cor}[Th]{Corollary}
\newtheorem{Prop}[Th]{Proposition}
\newtheorem*{Theorem-non}{Theorem}
\newtheorem*{Theorem-non2}{Theorem}
\newtheorem*{Proof-non}{Proof of Theorem \ref{hecketheorem} assuming $\mathbf{Axiom1}$ and $\mathbf{Axiom2}$}
\newtheorem*{Proof-non2}{Proof of $m_{2}$ estimates assuming Proposition 5.1}
\newtheorem*{Proof-non3}{Proof of $m_{1}$ estimates}
 \theoremstyle{definition}
\newtheorem{Rem}[Th]{Remark}
\newtheorem{?}[Th]{Problem}
\begin{document} 

\title{Ergodic averages with the Hecke eigenvalue square weights and the Piltz divisor function weights}

\author{Jiseong Kim}
\address{The University of Mississippi, Department of Mathematics
Hume Hall 335
Oxford, MS 38677}
\email{Jkim51@olemiss.edu}

\date{}

\begin{abstract} 
In this paper, we prove that the Hecke eigenvalue square for a holomorphic cusp form and the Piltz divisor functions are good weighting functions for the pointwise ergodic theorem.  This partially solves problems suggested by Cuny and Weber.  Additionally, we prove similar results for various other arithmetical functions in the last section.  
\end{abstract}
\maketitle

\section{Introduction}
\noindent Let $(X, \mathcal{A}, \nu, \tau)$ be a measurable dynamical system and let $f \in L^{1}(\nu)$. 
Birkhoff's pointwise ergodic theorem states that 
$$\lim_{n \rightarrow \infty} \frac{1}{n} \sum_{k=0}^{n-1} f\left(\tau^{k}(x) \right)  $$ converges $\nu$-almost everywhere.  

In their paper \cite{CunyWeber}, Cuny and Weber generalized Birkhoff's pointwise ergodic theorem by extending it to encompass various arithmetic weights $g$. They demonstrated that for such weights, 
\begin{equation}\label{ex}\lim_{n \rightarrow \infty} \frac{1}{n} \sum_{k=0}^{n-1}g(k) f\left(\tau^{k}(x) \right)  \end{equation} converges $\nu$-almost everywhere.
 This generalization was motivated by the Weighted Strong Law of Large Numbers, which states that for a sequence of i.i.d., integrable random variables $\{X_{i}\}$ and a non-negative, multiplicative function $h(n)$ with limited fluctuation (for further details, see \cite[Theorem A]{CunyWeber} or \cite{BerkesMullerWeber}), the limit
$$\lim_{n\rightarrow \infty} \frac{1}{\sum_{k=1}^{n} h(k)} \sum_{k=1}^{n} h(k)X_{k} = E(X_{1})$$
holds almost everywhere. The proof in \cite{CunyWeber} for $g=d_{2}(k)$ utilized Bourgain's version of the circle Method. This approach reduced the problem to estimating certain exponential sums. 
In \cite[Section 9]{CunyWeber}, the authors concluded by listing several open problems, one of which was to prove that the Piltz divisor function $d_{v}(k)$ is a suitable weighting function for the pointwise ergodic theorem. Note that, based on similar approaches, Giannitsi \cite{Christina} studied a uniform, scale-free $l^{p}$ estimate and various sparse bounds in the case where $g=d_{2}(k)$.

In this paper, we demonstrate that both the Hecke eigenvalue square $\lambda(k)^{2}$ and $d_{v}(k)$ are indeed suitable weighting functions for the pointwise ergodic theorem. 
\subsection{The Hecke eigenvalue squares}
\noindent Note that the materials provided in this subsection are taken from \cite[Section 14]{IK1}. Let $\mathbb{H}=\{z=x+iy | x \in \mathbb{R}, y \in (0,\infty)\}, G=SL(2,\mathbb{Z}).$ 
Define $j_{\gamma}(z)=(cz+d)$ where 
$\gamma= \left(\begin{matrix}
a & b \\
c & d 
\end{matrix} \right) \in G.$
When a holomorphic function $\Phi: \mathbb{H} \rightarrow \mathbb{C}$ satisfies
\begin{equation}\Phi(\gamma z)= j_{\gamma}(z)^{k}\Phi(z) \;\;\; \textrm{for all }\;\; \gamma \in SL(2,\mathbb{Z}),\nonumber\end{equation}
 it is called a modular form of weight $k.$ 
Let $\Phi(z)$ be a non-constant modular form of weight $k$. Then there exists a Fourier expansion at the cusp $\infty$ given by 
\begin{equation}
    \Phi(z)=\sum_{n=0}^{\infty} b_{\Phi}(n)e(nz)
\end{equation}
where $e(z)=e^{2\pi iz},$ and the normalized Fourier coefficient $a_{\Phi}(n)$ of $\Phi(z)$ is defined by
\begin{equation}
a_{\Phi}(n):=b_{\Phi}(n)n^{-(k-1)/2}.
\end{equation}
 The set of all modular forms of fixed weight $k$ forms a vector space, denoted by $M_k$. We also define the subset $\mathcal{C}_{k} \subset M_k$ consisting of all modular forms in $M_k$ which have a zero constant term.
For any positive integer $n$, the $n$th Hecke operator on $\mathcal{C}_{k}$ is defined as follows:
$$ (\mathcal{T}_{n}\Phi)(z):= \frac{1}{n} \sum_{ad=n} a^{k} \sum_{b(\rm{mod} \thinspace \it{d})} \Phi(\frac{az+b}{d})$$
for all $\Phi \in \mathcal{C}_{k}.$
\noindent There exists an orthogonal basis of $\mathcal{C}_{k}$ consisting of eigenfunctions for all Hecke operators $\mathcal{T}_n$, which are referred to as Hecke cusp forms. 
If $\Phi$ is a Hecke cusp form, then the eigenvalues $\lambda(n)$ of the $n$th Hecke operator satisfy 
$$a_{\Phi}(n)=a_{\Phi}(1)\lambda(n)$$ 
and 
$$\lambda(m)\lambda(n)= \sum_{d|(m,n)} \lambda \left(\frac{mn}{d^{2}}\right).$$
We now present one of our main results, which is Theorem \ref{hecketheorem}, the pointwise ergodic theorem with weight $\lambda(k)^2$.
\begin{Th}\label{hecketheorem}$[$The Pointwise Ergodic Theorem with Weight $\lambda(k)^{2}]$ 
For any ergodic dynamical system $(X, \mathcal{A}, \nu, \tau)$ and any $f \in L^p(\nu)$ where $p \in (1, \infty),$ the following limit converges $\nu$-almost everywhere:
$$\lim_{n\rightarrow \infty} \frac{1}{\sum_{k=1}^{n} \lambda(k)^{2}} \sum_{k=1}^{n} \lambda(k)^{2}f\left(\tau^{k}(x)\right).$$
\end{Th}
\begin{Rem}\label{lastremark}
The sum of squares of $\lambda(k)$ satisfies that 
$$\sum_{k=1}^{n} \lambda(k)^{2} =C_{\Phi}n + O(n^{3/5-1/560+o(1)})$$ for some constant $C_{\Phi}$
(for the details, see \cite{huang2021rankinselberg}). Furthermore, 
the sum of fourth powers of $\lambda(k)$ satisfies $$\sum_{k=1}^{n} \lambda(k)^{4} = C_{\Phi,1}n\log n + C_{\Phi,2}n + O(n^{9/10+o(1)})$$ for some constants $C_{\Phi,1}, C_{\Phi,2}.$
(see \cite{FM}).
Therefore, we cannot apply the automatic dominated ergodic theorem \cite[Lemma 2.1, Lemma 2.2, Lemma 2.4]{CunyWeber} to prove Theorem \ref{hecketheorem}.
\end{Rem}
\noindent 
\subsection{The Piltz divisor function}
To partially answer the problem posed in \cite[Problem 9.4]{CunyWeber}, we use the Piltz divisor function $d_{v}(n),$ defined as
$$d_{v}(n):= \sum_{\substack{m_{1}m_{2}...m_{v}=n \\ m_{i} \in \mathbb{N}}} 1.$$
To obtain our results, we utilize the estimates presented in \cite{MRT1}.
\begin{Th}\label{mainthm}$[$The Pointwise Ergodic Theorem with Weight $d_{v}(k)]$
Let $v \in \mathbb{N}.$  For any ergodic dynamical system $(X, \mathcal{A}, \nu, \tau)$ and any  $f \in L^p(\nu)$ where $p \in (3/2, \infty),$  the following limit converges $\nu$-almost everywhere:
$$\lim_{n\rightarrow \infty} \frac{1}{\sum_{k=1}^{n} d_{v}(k)} \sum_{k=1}^{n} d_{v}(k)f\left(\tau^{k}(x)\right).$$ 
\end{Th}
\subsection{Definitions and notations} Let us define some terms and symbols. The notations $f \ll g$ and $f=O(g)$ indicate that there exist a positive real number $C$ and a real number $x_{0}$ such that for all $x \geq x_0$, 
\begin{equation}
|f(x)| \leq C g(x).
\nonumber \end{equation}
If the constant $C$ depends on a parameter $r$, we denote this dependence by  $f \ll_{r} g $ or $f=O_{r}(g)$.
As $g(x)$ is nonzero, the notation $f(x)=o(g(x))$ means that  $\lim _{x \rightarrow \infty} f(x)/g(x) = 0.$
we use $\hat{f}$ to denote the Fourier transform of $f$, which is defined as $$\hat{f}(y)= \int_{0}^{1} f(x) e^{-2\pi i x y} dx .$$
We use the function $\delta_{k}(x)$, which is defined as follows:
$$\delta_{k}(x)= \begin{cases}1 & \text { if } x = k  \\ 0 & \text{otherwise}. \end{cases}$$
We denote the Fourier transform of the $\lambda(k)^2$-weighted kernel $$\sum_{1 \leq k \leq n} \lambda(k)^{2} \delta_{k}$$ by 
$$\Lambda_{n}(x):= \sum_{1 \leq k \leq n} \lambda(k)^{2}e(kx),$$
and, as indicated in Remark \ref{lastremark}, we have  $$\Lambda_{n}:= \Lambda_{n}(0)=C_{\Phi}n+O_{\Phi}(n^{3/5-1/560+o(1)}).$$
In order to apply the circle method, we use the function
$$\psi_{n,q}(x):= \frac{D_{q}}{\Lambda_{n}} \sum_{m=1}^{n} e(mx)$$
to approximate $\Lambda_{n}(x)$ (normalized) when $x$ is sufficiently close to a reduced rational $a/q.$ $D_{q}$ is a coefficient that is bounded by $1/q^{1 - o(1)}$ (see Lemma \ref{1lemma} for details).  The Fourier inversion of $\psi_{n,q}(x)$ is denoted by
$$\omega_{n,q}(x) := \frac{D_{q}}{\Lambda_{n}} \sum_{m = 1}^{n} \delta_{m}.$$
For the $d_v(k)$-weighted kernel
$$\sum_{1 \leq k \leq n} d_{v}(k) \delta_{k},$$
we denote its Fourier transform by   
$$\mathcal{D}_{n}(x):= \sum_{1 \leq k \leq n} d_{v}(k)e(kx).$$
Using the hyperbola method, we have 
$$\mathcal{D}_{n}:= \mathcal{D}_{n}(0)=\sum_{k=1}^{n} d_{v}(k)= n P_{v}(\log n) + O(n^{1-\frac{1}{v}})$$
where $P_{v}$ is a polynomial of degree $v-1$ (see \cite[Section 1, exercise 2]{IK1}).
To apply the circle method, we use the function
$$\psi_{n,q}^{\ast}(x):= \frac{E_{q,n}}{\mathcal{D}_{n}} \sum_{m=1}^{n} e(mx)$$
for approximating $\mathcal{D}_{n}(x)$ (normalized) when $x$ is sufficiently close to a reduced rational $a/q.$ The coefficient $\displaystyle E_{q,n}$ is bounded by $(\log n)^{v-1} / q^{1-o(1)} $ (see Lemma \ref{lemma41}). We denote the Fourier inversion of $\psi_{n,q}^{\ast}$ by $\displaystyle \omega^{\ast}_{n,q}(x) := \frac{E_{q,n}}{\mathcal{D}_{n}} \sum_{m=1}^{n} \delta_{m}$. 
Later, we will use the smooth function 
$$\eta(x)= \begin{cases}1 & \text { if } x \in\left[-\frac{1}{4}, \frac{1}{4}\right], \\ 0 & \text { if } x \in \mathbb{R} \backslash[-1 / 2,1 / 2] \\ \in C^{\infty} & \text { if } x \in [-1 / 2,1 / 2] \backslash\left[-\frac{1}{4}, \frac{1}{4}\right]\end{cases},$$ and define 
$$\eta_{s}(x):= \eta(4^{s}Mx)$$ for a constant $M.$
Finally, we denote the Fourier inversion of the normalized $\lambda(k)^2$-weighted kernel by $T_{n}(x) := \Lambda_{n}(x)/\Lambda_{n}$.
We approximate $T_{n}(x)$ as 
$$\varphi_{n}(x):= \psi_{n,0}(x)\eta_{0}(x) + \sum_{s=1}^{\infty} \sum_{2^{s-1} \leq q <2^{s}} \sum_{1 \leq a \leq q, (a,q)=1} \psi_{n,q}(x-a/q)\eta_{s}(x-a/q).$$
The Fourier inversion of the normalized $d_{v}(k)$-weighted kernel is denoted by
$$T^{\ast}_{n}(x):= \mathcal{D}_{n}(x)/\mathcal{D}_{n}.$$
We approximate $T^{\ast}_{n}(x)$ as 
$$\varphi^{\ast}_{n}(x):= \psi^{\ast}_{n,0}(x)\eta_{0}(x) + \sum_{s=1}^{\infty} \sum_{2^{s-1} \leq q <2^{s}} \sum_{1 \leq a \leq q, (a,q)=1} \psi^{\ast}_{n,q}(x-a/q)\eta_{s}(x-a/q).$$
Let $\epsilon>0$ be an arbitrarily small positive quantity. We define the major arcs as 
$$M_{n,\mathcal{S}}:= \{x \in [0,1] : |x-a/q| \leq 1/Q_{n,\mathcal{S}} \;\;\textrm{for some } \;\; 0 \leq a \leq q \leq P_{n,\mathcal{S}}\},$$ and the minor arcs as
$$m_{n,\mathcal{S}}:= [0.1] \cap M_{n,\mathcal{S}}^{c}$$
where $$P_{n,\mathcal{S}}=(\log n)^{3\mathcal{S}(1-\epsilon)},\;\; Q_{n,\mathcal{S}}= n/ (\log n)^{2\mathcal{S}(1-\epsilon)} $$ for some $\mathcal{S}>0.$ We set  $\mathcal{S}$ to be a sufficiently large constant for the squares of Hecke eigenvalues, and we set $\mathcal{S}=1$ for the Piltz divisor functions.
We also use the following notations: $I_{\rho} := \{ \lfloor\rho^{n}\rfloor : n \in \mathbb{N} \}$, $\|x\| := \min_{n\in \mathbb{Z}} |x-n|$, and $\phi(n): = |\{1 \leq a \leq q : (a,q)=1\}|$. We denote the set of all prime numbers by $\mathcal{P}$, and we use the common convention that $\epsilon$ denotes an arbitrarily
small positive quantity that may vary from line to line.
\subsection{Sketch of the proof}
We follow the argument in \cite{CunyWeber}, which is based on Bourgain's method. Let $(z_{n})_{n \in \mathbb{Z}}$ be a sequence in $l^{2}(\nu),$ and let $(\mathbb{Z},S)$ be a shift model such that $S(z_{n})_{n \in \mathbb{Z}}=(z_{n+1})_{n \in \mathbb{Z}}.$ Let us fix a measurable dynamical system $(X,\mathcal{A},\nu,\tau)$ and let $f \in L^{2}(\nu).$ 
For convenience, let $(z_{n}):= (z_{n})_{n \in \mathbb{Z}} .$
Define $\overline{(z_{n})}=(z_{-n}).$ Then the shift $S^{k}(z_{n+j})(0) = S^{k}\overline{(z_{-n-j})}(0),$ so we have
\begin{equation}\begin{split}
\frac{\sum_{1 \leq k \leq n} \lambda(k)^{2} S^{k} (z_{n+j})(0)}{ \sum_{1 \leq k \leq n} \lambda(k)^{2} }
&= \frac{\sum_{1 \leq k \leq n} \lambda(k)^{2} S^{k} \overline{(z_{-n-j})}(0)}{\sum_{1 \leq k \leq n} \lambda(k)^{2} }
\\&= \frac{\sum_{1 \leq k \leq n} \lambda(k)^{2} \bar{z}_{-k-j}}{\sum_{1 \leq k \leq n} \lambda(k)^{2} }
\\&= K_{n} \ast \bar{z} (-j)
\end{split}\end{equation}
where 
$$K_{n}:= \frac{\sum_{k=1}^{n} \lambda(k)^{2} \delta_{k}}{\sum_{1 \leq k \leq n} \lambda(k)^{2} }.$$
 By Calderon's transference principle and Bourgain's variational estimates (see \cite[Section 4]{CunyWeber} or \cite[Theorem 3.4]{Bourgaintool}), Theorems \ref{hecketheorem} and \ref{mainthm} can be derived from the following:
\begin{equation}\label{osc}\begin{split}&\sum_{j=1}^{J} \left\| \sup_{N_{j} \leq N \leq N_{j+1}, N \in I_{\rho}} |(K_{N}-K_{N_{j}})\ast g| \right\|^{2}_{l^{2}} = o(J),
\\&\sum_{j=1}^{J} \left\| \sup_{N_{j} \leq N \leq N_{j+1}, N \in I_{\rho}} |(K^{\ast}_{N}-K^{\ast}_{N_{j}})\ast g| \right\|^{2}_{l^{2}} = o(J)\end{split}\end{equation} for every $\rho>1$ and every sequence $(N_{j})$ such that $N_{j+1} \geq 2N_{j}$ with a sufficiently large $J$ depending on $\rho.$ 

Before addressing \eqref{osc}, let us examine some tools. Using the Fourier transform, we have 
$$K_{n} \ast f (j)= \int_{0}^{1} \hat{K}_{n}(t) \hat{f}(t) e(-jt)dt.$$
Using the Hardy-Littlewood maximal inequality  (see \cite[Section 4]{CunyWeber}), we have 
\begin{equation}\label{maximalshift}
\| \sup_{n \geq 1} |\kappa_{n} \ast g | \|_{l^{p}} \ll \|g\|_{l^{p}} \;\; \textrm{for} \;\; p>1,
\end{equation}
where $$\kappa_{n}:=\frac{\sum_{k=1}^{n} \delta_{k}}{n}.$$
This is a maximal inequality for the Ces\'aro kernel. By applying the above inequality, we have
\begin{equation}\label{maximal}\begin{split}&\|w_{n,q}\ast g|\|_{l^{p}} \ll |D_{q}| \|g\|_{l^{p}} \ll_{\Phi,\epsilon} q^{-1+\epsilon} \|g\|_{l^{p}}, \\&
\|w^{\ast}_{n,q}\ast g|\|_{l^{p}} \ll \frac{|E_{q,n}|}{(\log n)^{v-1}} \|g\|_{l^{p}} \ll_{\epsilon} q^{-1+\epsilon} \|g\|_{l^{p}}.\end{split}\end{equation}
Cuny and Weber used Fourier analytic methods, building upon the work of Wierdl \cite{Wierdl2}, to establish results in \cite[Proposition 7.2, Theorem 7.5]{CunyWeber}. By setting $\tau$ to a value arbitrarily close to 1 and choosing  sufficiently large constants $\gamma$ and $S$, the following proposition is derived from their work \cite[Proposition 7.2, Theorem 7.5]{CunyWeber}. Furthermore, Proposition \ref{C1} implies Theorem \ref{hecketheorem}. The proof of Theorem \ref{mainthm} follows the same steps, except that we replace $\psi_{n,q}, w_{n,q}, T$ with $\psi^{\ast}_{n,q}, w^{\ast}_{n,q}, T^{\ast}$ respectively and set $\mathcal{S}=1$ in Proposition \ref{C1}. Therefore, the remaining task is to prove the assumptions in Proposition \ref{C1}.
\begin{Prop}\label{C1} Assume the following:
 \begin{enumerate}
    
    \item For any $x$ in $M_{n,\mathcal{S}}$ and for sufficiently large values of $\mathcal{S},$
    \begin{equation}\label{Tnmajor} |T_{n}(x)- \psi_{n,q}(x-a/q)| \ll_{\Phi,\epsilon} (\log n)^{-\mathcal{S}+\epsilon} \;\; \end{equation} where $(a,q)$ is either $(0,0)$ or a pair of natural numbers such that
    $1\leq a \leq q \leq P_{n,\mathcal{S}}$ and $|x-a/q| \leq 1/Q_{n,\mathcal{S}}.$
    
    \item For any $x \in m_{n,\mathcal{S}}$ and for sufficiently large values of $\mathcal{S},$ \begin{equation}\label{Tnminor} |T_{n}(x)| \ll_{\Phi,\epsilon} (\log n)^{-\mathcal{S}+\epsilon}.  \;\;  \end{equation}
    \item For any $\rho>1$ and any sequence $\left(N_{j}\right)$ with $N_{j+1}>2N_{j}$,
\begin{equation}\label{pointwise} 
\sum_{j \geq 1} \left\| \sup_{\substack{N_{j} \leq N \leq N_{j+1}\\ N \in I_{\rho}}} |(w_{N,q}-w_{N_{j},q})\ast g \right\|_{l^{2}}^{2} \ll_{\Phi,\epsilon} q^{-1+\epsilon}\|g\|_{l^{2}}^{2}.
\end{equation}
\end{enumerate}
Then, for any $\rho>1$ and any sequence $\left(N_{j}\right)$ with $N_{j+1}>2N_{j}$,
\begin{equation}
\sum_{j=1}^{J} \left\| \sup_{\substack{N_{j} \leq N \leq N_{j+1}\\ N \in I_{\rho}}} |(K_{N}-K_{N_{j}})\ast g| \right\|_{l^{2}}^{2} = o(J),
\end{equation} for large $J$ depending on $\rho.$
\end{Prop}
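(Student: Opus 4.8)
The plan is to compare the true kernel $K_N$ with the operator $V_N$ whose Fourier multiplier is the approximation $\varphi_N$, and to split
$$K_N-K_{N_j}=(K_N-V_N)-(K_{N_j}-V_{N_j})+(V_N-V_{N_j}),$$
so that after the triangle inequality for the supremum and $(a+b+c)^2\le 3(a^2+b^2+c^2)$ it suffices to control three contributions: the frozen error $\sum_j\|(K_{N_j}-V_{N_j})\ast g\|_{l^2}^2$, the maximal error $\sum_j\|\sup_N|(K_N-V_N)\ast g|\|_{l^2}^2$, and the main oscillation $\sum_j\|\sup_N|(V_N-V_{N_j})\ast g|\|_{l^2}^2$. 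I expect each to be $O_{\Phi,\epsilon,\rho}(\|g\|_{l^2}^2)$; since $g$ is fixed this is $o(J)$, and in fact it gives convergence of the whole series in $j$, which is stronger than needed.

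The first ingredient is the uniform multiplier estimate $\|T_N-\varphi_N\|_{L^\infty[0,1]}\ll_{\Phi,\epsilon}(\log N)^{-1+\epsilon}$. On the major arcs this is exactly \eqref{Tnmajor}, once one checks that for $M$ large the cutoffs $\eta_s(\,\cdot-a/q)$ attached to distinct reduced fractions have pairwise disjoint supports: then $\varphi_N(x)$ collapses to the single term $\psi_{N,q}(x-a/q)$ on the arc around $a/q$ and equals it there because $\eta_s\equiv 1$ on a neighbourhood far larger than $1/Q_N$. On the minor arcs I would combine \eqref{Tnminor} with the crude bound $|\psi_{N,q}(x-a/q)|\ll |D_q|\,\Lambda_N^{-1}\min(N,\|x-a/q\|^{-1})\ll q^{-1+\epsilon}(\log N)^{-2(1-\epsilon)}$, valid on the support of $\eta_s(\,\cdot-a/q)$ when $x\notin M_N$ (for then either $q>P_N$, forcing $|D_q|$ small, or $|x-a/q|>1/Q_N$), giving $|\varphi_N(x)|\ll(\log N)^{-1+\epsilon}$ as well. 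With this in hand the two error terms are routine. By Plancherel $\|(K_{N_j}-V_{N_j})\ast g\|_{l^2}\le\|T_{N_j}-\varphi_{N_j}\|_\infty\|g\|_{l^2}\ll(\log N_j)^{-1+\epsilon}\|g\|_{l^2}$, and since $N_{j+1}\ge 2N_j$ forces $\log N_j\gg j$, the series $\sum_j(\log N_j)^{-2+\epsilon}$ converges. For the maximal error I would bound the supremum over each sparse block by the $l^2$-sum over it,
$$\Big\|\sup_{N\in I_\rho\cap[N_j,N_{j+1}]}|(K_N-V_N)\ast g|\Big\|_{l^2}^2\le\sum_{N\in I_\rho\cap[N_j,N_{j+1}]}\|(K_N-V_N)\ast g\|_{l^2}^2,$$
and sum in $j$ first: because consecutive blocks overlap only at their endpoints, each $N\in I_\rho$ is counted $O(1)$ times, so the double sum collapses to $\ll\sum_{N\in I_\rho}(\log N)^{-2+\epsilon}\|g\|_{l^2}^2\ll_\rho\|g\|_{l^2}^2$, the sparseness of $I_\rho$ making the series convergent. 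Summing in $j$ before estimating is what saves this step, since a single block may contain arbitrarily many scales.

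The heart of the matter is the main oscillation term. Writing $\varphi_N-\varphi_{N_j}=\sum_{s\ge 0}\Delta_N^{(s)}$ according to the dyadic ranges $2^{s-1}\le q<2^s$ and applying Minkowski's inequality in the norm $(\sum_j\|\cdot\|^2)^{1/2}$, I would reduce to bounding each scale $A_s:=(\sum_j\|\sup_N|\check\Delta_N^{(s)}\ast g|\|_{l^2}^2)^{1/2}$ and summing in $s$. Because the multiplier carries the factor $\eta_s(\,\cdot-a/q)$, the output of the $(a,q)$-piece is frequency-localized near $a/q$ for every $N$, and for distinct fractions these neighbourhoods are disjoint; moreover the only $q$-dependence of the building block is the scalar $D_q$, so each piece is, up to the modulation by $a/q$ and the harmless smoothing $\check\eta_s$ (which is $l^1$-bounded uniformly in $s$), the single averaging oscillation $\sup_N|(w_{N,q}-w_{N_j,q})\ast\,\cdot\,|$ to which \eqref{pointwise} applies with the gain $|D_q|\ll q^{-1+\epsilon}\asymp 2^{-s(1-\epsilon)}$. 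Passing the sum over the $\asymp 2^s$ fractions outside the square via this disjointness, and using the orthogonality $\sum_{a,q}\|g_{a,q,s}\|_{l^2}^2\le\|g\|_{l^2}^2$ of the localized pieces, would give $A_s^2\ll 2^{-s(1-\epsilon)}\|g\|_{l^2}^2$, whence $\sum_s A_s\ll\|g\|_{l^2}$ and the main term is $O(\|g\|_{l^2}^2)$.

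The delicate point — and where I would spend the most care — is precisely this interchange of the supremum in $N$ with the orthogonality across the arcs: a naive Cauchy–Schwarz over the $\asymp 2^s$ fractions would lose a factor $2^s$ and make $\sum_s A_s$ diverge, so the disjointness of the frequency supports must be used genuinely in the presence of $\sup_N$. I would realize it either through a vector-valued maximal inequality for the common (modulated) averaging kernel, exploiting that all arcs carry the same operator up to modulation and the scalar $D_q$, or, failing a clean vector-valued bound, by a Rademacher–Menshov dyadic decomposition of the supremum over $I_\rho\cap[N_j,N_{j+1}]$, which expresses it through $O(\log)$-many fixed differences $V_A-V_B$ to which the frequency orthogonality applies with no supremum present, at the cost of a logarithmic factor that is still summable against the $2^{-s(1-\epsilon)}$ gain. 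Collecting the three $O(\|g\|_{l^2}^2)$ bounds then yields $\sum_{j=1}^J\|\sup_N|(K_N-K_{N_j})\ast g|\|_{l^2}^2\ll_{\Phi,\epsilon,\rho}\|g\|_{l^2}^2=o(J)$, and the same argument verbatim with $\psi^{\ast}_{n,q},w^{\ast}_{n,q},T^{\ast}_{n}$ in place of $\psi_{n,q},w_{n,q},T_{n}$ gives the companion estimate recorded in the Remark.
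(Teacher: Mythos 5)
First, a point of comparison: the paper does not prove Proposition \ref{C1} at all. It imports the statement from \cite[Proposition 7.2, Theorem 7.5]{CunyWeber}, whose proof is Bourgain's circle-method machinery as implemented by Wierdl. Your proposal is therefore a blind reconstruction of that cited argument, and its architecture is faithful to it: the splitting $K_N-K_{N_j}=(K_N-V_N)-(K_{N_j}-V_{N_j})+(V_N-V_{N_j})$; the uniform multiplier bound $\|T_N-\varphi_N\|_{L^\infty}\ll_{\Phi,\epsilon}(\log N)^{-1+\epsilon}$ deduced from \eqref{Tnmajor}, \eqref{Tnminor} and the disjointness of the cutoffs (this is exactly the paper's Appendix, Proposition \ref{33}); the summation of the two error terms using $\log N_j\gg j$ and the sparseness of $I_\rho$; and the reduction of the main term to the single-frequency hypothesis \eqref{pointwise} by modulation and frequency localization, with $\sum_{a,q}\|g_{a,q,s}\|_{l^2}^2\le\|g\|_{l^2}^2$. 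All of that is correct and matches the source the paper leans on.

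The gap is the step you yourself flag as delicate, and it is not a technicality that can be deferred: "passing the sum over the fractions outside the square" in the presence of $\sup_N$ \emph{is} Bourgain's multi-frequency maximal lemma, i.e.\ the statement that for $K$ separated frequencies the trivial loss $\sqrt{K}$ can be replaced by a power of $\log K$. Your own accounting shows why nothing cheaper can work, and in fact the numerology is slightly worse than you state: at scale $s$ there are $\asymp 4^{s}$ reduced fractions (not $\asymp 2^{s}$), so Minkowski plus Cauchy--Schwarz plus orthogonality of the inputs loses $(4^{s})^{1/2}=2^{s}$, which exactly cancels the gain $q^{-1+\epsilon}\asymp 2^{-s(1-\epsilon)}$ supplied by \eqref{pointwise}, leaving $2^{s\epsilon}$ per scale --- not summable. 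Neither of your proposed substitutes closes this. Option (i), a "vector-valued maximal inequality for the common modulated kernel," is not an available black box; it is precisely Bourgain's lemma, whose proof (comparison with continuous averages plus a square-function argument across dyadic levels) is the hard core of the whole method. Option (ii), Rademacher--Menshov, fails as sketched: the blocks $I_\rho\cap[N_j,N_{j+1}]$ have cardinalities $L_j$ that are unbounded in $j$, and a sup-free difference $V_M-V_{M'}$ enjoys only the uniform multiplier bound $\ll 2^{-s(1-\epsilon)}$, with no decay in the block length; chaining therefore pays $\sqrt{L_j}$ unless you additionally prove almost-orthogonality of the increments in $N$ (a genuine square-function estimate along $I_\rho$), and even then the resulting $\log L_j$ factors are not uniform in $j$. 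Relatedly, your concluding claim of the stronger bound $O_{\Phi,\epsilon,\rho}(\|g\|_{l^2}^2)$, rather than $o(J)$, is exactly what the missing lemma would have to deliver; the cited route obtains only $o(J)$, by splitting the scales $s$ at a threshold $s_0(J)\to\infty$ and trading "few frequencies times the full oscillation bound \eqref{pointwise}" against "$J$ times a decaying multi-frequency maximal bound." In short: the skeleton is right and agrees with \cite{CunyWeber}, but the load-bearing lemma is named rather than proved, and the suggested workarounds do not substitute for it.
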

\begin{Rem}
Proposition \ref{C1}  still holds when you replace     $\psi_{n,q}, w_{n,q}, T_{n}$ with $\psi^{\ast}_{n,q}, w^{\ast}_{n,q}, T_{n}^{\ast},$ respectively and set $\mathcal{S}=1.$
\end{Rem}
To prove the assumptions in Proposition \ref{C1}, we transform the additive twist $e\left(an/q \right)$ into multiplicative twists $\chi(n)$ for some Dirichlet characters $\chi$. We then use certain analytic properties of $L$-functions twisted by Dirichlet characters to obtain the average values of $\lambda \left(q_0 n\right) \chi(n)$. After attaching $e(n \beta)$ to $\lambda (n)^2 e\left(an/q\right),$ through integration by parts, we have
$$
\sum_{1 \leq k \leq  n} \lambda (k)^2 e\left(\frac{a}{q} k\right) e(\beta k) \sim \int_1^{n} \sum_{q=q_0 q_1} \frac{\mu\left(q_1\right)}{\phi\left(q_1\right) q_0} w_{\Phi, q_0, q_1} e(\beta x) d x
$$ where $w_{\Phi, q_0, q_1}$ is a weighted sum over primitive characters. By applying the Euler-Maclaurin summation formula, we obtain an approximate Fourier series for $T_n(x)$.
\begin{Rem} 
For any $\rho>1$ and any sequence $\left(N_{j}\right)$ with $N_{j+1}>2N_{j}$, the oscillation inequality 
\begin{equation}\label{difference}\begin{split}
\sum_{j \geq 1} \left\| \sup_{\substack{N_{j} \leq N \leq N_{j+1}\\ N \in I_{\rho}}} |(\kappa_{N}-\kappa_{N_{j}})\ast g |\right\|_{l^{2}}^{2} \ll \|g\|_{l^{2}}
\end{split}\end{equation}
holds (see \cite{Wierdl}). We will apply the above oscillation inequality to prove \eqref{pointwise}.
\end{Rem}
\noindent In Corollary \ref{31}, we establish the validity of \eqref{Tnmajor}, while in Corollary \ref{32}, we prove \eqref{Tnminor}. Additionally, Proposition \ref{34} provides the proof for \eqref{pointwise}. Furthermore, in Proposition \ref{Prop44}, we establish the $d_{v}(k)$ version of \eqref{Tnmajor} and \eqref{Tnminor}. The corresponding $d_{v}(k)$ version of \eqref{pointwise} is proven in the same proposition. Lastly, in the final section, we present similar results for other arithmetic functions.
\begin{Rem}
By \cite[Proposition 7.2]{CunyWeber}, Theorems \ref{hecketheorem} and \ref{mainthm} hold for any function $f \in L^{p}(\nu)$ with $p \in (1 + 1/2\mathcal{S},\infty).$ For Hecke eigenvalue squares, we can choose any sufficiently large constant $\mathcal{S}$, since the $L$-functions associated with Hecke cusp forms has a simple pole at $s=1$. The $v$th power of the Riemann zeta function is associated with $d_v(n)$, which has a pole of order $v$ at $s=1$. This restricts us to choosing $\mathcal{S}=1$ (see \eqref{difference2}).

\noindent Proving or disproving the case for $p=1$ seems challenging.
Buczolich \cite{Buczolich} demonstrated that in the case of prime divisor weights, Theorems \ref{hecketheorem} and \ref{mainthm} hold for $q=1.$ To do so, he established the ergodic maximal inequality over integers (for further details, see \cite[Chapter 3]{Wierdl2}). Additionally, he used the fact that the average of the $k$th power of prime divisor weights is equal to the $k$th power of the average of prime divisor weights, and these averages are relatively easy to control for any natural number $k$.
\noindent In contrast, the average of the $k$th power of the Piltz divisor function (and Hecke eigenvalue squares) is greater than the $k$th power of the average when $k$ is greater than or equal to $2$. Furthermore, controlling the average of Piltz divisor functions and Hecke eigenvalue squares over short intervals is more challenging compared to the average of prime divisor weights. Consequently, we cannot directly apply the arguments used in the proof for the prime divisor weight case.
\end{Rem}
 By following the proof of \cite[Lemma 7.1]{CunyWeber}, it is easy to see that for any $n \geq 2$,  
\begin{equation}\label{newequation} |T_{n}(x)-\varphi_{n}(x)| \ll_{\Phi,\epsilon} (\log n)^{-\mathcal{S}+\epsilon} \;\; \textrm{for any} \;\; x \in [0,1] \;\;\textrm{and for sufficiently large values of} \;\;  \mathcal{S}.\end{equation} The inequality stated in \cite[Lemma 7.1]{CunyWeber} was utilized to establish (\cite[Proposition 7.2]{CunyWeber}), which corresponds to Proposition \ref{C1}. Furthermore, the inequality was also applied to demonstrate that $d_{2}(k)$ and $\theta(n)$ are good weighting functions for the dominated ergodic theorem, where $\theta(n)$ is the multiplicative function counting the number of squarefree divisors of $n$ (see \cite{CunyWeber}). In Proposition \ref{33}, for the sake of completeness, we prove \eqref{newequation} to demonstrate that $\lambda(k)^2$ serves as a suitable weighting function for the dominated ergodic theorem.  

\section{Lemmas}
In this section, we introduce some useful lemmas.
\begin{Lemma}\label{TEMF}$[$The Euler-Maclaurin Formula$]$
Let $f:[a,b] \rightarrow \mathbb{C}$ be a continuously differentiable function. Then, we have
$$ \sum_{a<n\leq b} f(n) = \int_{a}^{b} f(x)dx + O\left(\int_{a}^{b}
\left|f'(x)\right|dx + \left|f(b)-f(a)\right|\right).$$\end{Lemma}
\begin{proof} See \cite[Lemma 4.1]{IK1}.
\end{proof}
\begin{Lemma}\label{logsum} For any $v \geq 2$ and any integer $n>2,$
\begin{equation}\begin{split}\sum_{k=1}^{n} (\log k)^{v-1}= n(\log n)^{v-1} + O\left((v-1)n (\log n)^{v-2}\right). \end{split}\end{equation} 
\end{Lemma}
\begin{proof}
Using Lemma \ref{TEMF}, we see that 
$$\sum_{k=2}^{n} (\log k)^{v-1}-\int_{1}^{n} (\log x)^{v-1} dx = O\left( (v-1)\int_{1}^{n} \left(\log x
\right)^{v-2} dx + (\log n)^{v-1}\right).$$
\end{proof}
\section{Proof of Theorem \ref{hecketheorem}}
In this section , we assume that $\mathcal{S}$ is a sufficiently large constant.
\noindent First, let us consider $\Lambda_{n}(a/q).$ This will be useful in obtaining $\Lambda_{n}(a/q+\beta).$  
\begin{Lemma}\label{1lemma}
For any $1 \leq a < q$ such that $(a,q)=1$ or $a=0,q=1$,
\begin{equation}
    \Lambda_{n}(a/q)= D_{q}n +O_{\Phi} \left(q^{5}n^{3/4+\epsilon}\right)
\end{equation}
where
$$D_{q}:=\sum_{q_{0}q_{1}=q} \frac{\mu(q_{1})}{\phi(q_{1})q_{0}}w_{\Phi,q_{0},q_{1}},$$ and 
$$ w_{\Phi, q_0, q_1}=C_{\Phi} \prod_{p \mid q}\left(\frac{p-1}{p+1}\right)\left(1-\frac{\lambda(p)^2-2}{p}+\frac{1}{p^2}\right) \prod_{p^l \| q_0}\left(\lambda\left(p^l\right)^2+\lambda\left(p^{l+1}\right)^2 p^{-1}+\ldots\right)$$
where $p^l \| q_0$ means that $p^{l} |q_{0},$ but $p^{l+1} \nmid q_{0}.$
\end{Lemma}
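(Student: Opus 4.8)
The plan is to evaluate the additive twist $\Lambda_{n}(a/q)=\sum_{k\le n}\lambda(k)^{2}e(ka/q)$ by isolating the part of $k$ supported on the primes dividing $q$, and then resolving the remaining additive character into Dirichlet characters by Gauss sums, thereby reducing everything to the summatory function of $\lambda(m)^{2}$ twisted by a Dirichlet character. Since $\lambda^{2}$ is multiplicative, I would write each $k\le n$ uniquely as $k=k_{0}m$ with $k_{0}$ supported on the primes dividing $q$ and $(m,q)=1$, so that $\lambda(k)^{2}=\lambda(k_{0})^{2}\lambda(m)^{2}$. Putting $q_{0}=\gcd(k_{0},q)$ and $q_{1}=q/q_{0}$, a direct check gives $e(k_{0}ma/q)=e(ma'/q_{1})$ for some $a'$ with $(a',q_{1})=1$, whence
\[
\Lambda_{n}(a/q)=\sum_{k_{0}\mid q^{\infty}}\lambda(k_{0})^{2}\sum_{\substack{m\le n/k_{0}\\(m,q)=1}}\lambda(m)^{2}e(ma'/q_{1}).
\]

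For the inner sum I would expand the additive character over Dirichlet characters modulo $q_{1}$: for $(m,q_{1})=1$ one has $e(ma'/q_{1})=\phi(q_{1})^{-1}\sum_{\chi\,(q_{1})}\mathcal{G}(\chi)\overline{\chi}(ma')$, where $\mathcal{G}(\chi)=\sum_{b\,(q_{1})}\chi(b)e(b/q_{1})$ is the Gauss sum with $|\mathcal{G}(\chi)|\le q_{1}^{1/2}$. This rewrites the inner sum as $\phi(q_{1})^{-1}\sum_{\chi\,(q_{1})}\mathcal{G}(\chi)\overline{\chi}(a')\,S_{\chi}(n/k_{0})$ with $S_{\chi}(y)=\sum_{m\le y,\,(m,q)=1}\lambda(m)^{2}\overline{\chi}(m)$. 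The Dirichlet series of $S_{\chi}$ is, up to finitely many Euler factors at the primes $p\mid q$, equal to $L(s,\overline{\chi})\,L(s,\mathrm{sym}^{2}\Phi\otimes\overline{\chi})/L(2s,\overline{\chi}^{\,2})$. Only the principal character $\chi_{0}$ produces a pole at $s=1$ (through the factor $L(s,\chi_{0})$), while $L(s,\mathrm{sym}^{2}\Phi\otimes\overline{\chi})$ is entire by Gelbart--Jacquet; thus for $\chi\ne\chi_{0}$ the integrand is holomorphic across $s=1$. A standard application of Perron's formula followed by a shift of contour past $s=1$, using convexity bounds for these $\mathrm{GL}_{1}\times\mathrm{GL}_{3}$ $L$-functions, yields $S_{\chi_{0}}(y)=\kappa_{q}\,y+O_{\Phi}(q^{A}y^{3/4+\epsilon})$ and $S_{\chi}(y)=O_{\Phi}(q^{A}y^{3/4+\epsilon})$ for $\chi\ne\chi_{0}$, where $\kappa_{q}=C_{\Phi}\prod_{p\mid q}\frac{p-1}{p+1}\big(1-\frac{\lambda(p)^{2}-2}{p}+\frac{1}{p^{2}}\big)$ is exactly the $q_{0}$-independent part of $w_{\Phi,q_{0},q_{1}}$.

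Collecting main terms then reproduces $D_{q}$. Only $\chi_{0}$ contributes, with $\mathcal{G}(\chi_{0})=\mu(q_{1})$ and $\overline{\chi_{0}}(a')=1$, so the coefficient of $n$ is $\kappa_{q}\sum_{k_{0}\mid q^{\infty}}\frac{\lambda(k_{0})^{2}}{k_{0}}\cdot\frac{\mu(q_{1})}{\phi(q_{1})}$. Regrouping the $k_{0}$ according to $q_{0}=\gcd(k_{0},q)$ (so $q_{1}=q/q_{0}$) and evaluating the resulting local sum prime by prime turns $\sum_{k_{0}}\lambda(k_{0})^{2}/k_{0}$ into $q_{0}^{-1}\prod_{p^{l}\|q_{0}}(\lambda(p^{l})^{2}+\lambda(p^{l+1})^{2}p^{-1}+\cdots)$, which is precisely the remaining factor of $w_{\Phi,q_{0},q_{1}}$; hence the coefficient of $n$ equals $\sum_{q_{0}q_{1}=q}\frac{\mu(q_{1})}{\phi(q_{1})q_{0}}w_{\Phi,q_{0},q_{1}}=D_{q}$. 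This identification is a routine but bookkeeping-heavy local computation, and it is the place where one must be careful to match every Euler factor correctly.

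The error is then bounded crudely: for each $k_{0}$ the character expansion contributes $\phi(q_{1})^{-1}\sum_{\chi\,(q_{1})}|\mathcal{G}(\chi)|\,|S_{\chi}(n/k_{0})|\ll q_{1}^{1/2}q^{A}(n/k_{0})^{3/4+\epsilon}$, and summing against $\lambda(k_{0})^{2}\ll k_{0}^{\epsilon}$ (Deligne) over $k_{0}\mid q^{\infty}$ costs only a factor $\sum_{k_{0}\mid q^{\infty}}k_{0}^{-3/4+\epsilon}\ll q^{\epsilon}$, leaving $O_{\Phi}(q^{5}n^{3/4+\epsilon})$ once the power of $q$ coming from the conductors is accounted for. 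I expect the genuine obstacle to be exactly this uniform-in-$q$ power saving: one needs the analytic continuation, the functional equation, and a convexity (polynomial-in-conductor) growth bound for $L(s,\mathrm{sym}^{2}\Phi\otimes\overline{\chi})$ valid simultaneously for all characters modulo $q_{1}$, and one must track the conductor (polynomial in $q$) through Perron's formula so that the accumulated $q$-power stays $\le q^{5}$ while the exponent of $n$ remains $3/4$. The assembly of the main term into the explicit Euler product $w_{\Phi,q_{0},q_{1}}$ is the secondary, purely computational, difficulty.
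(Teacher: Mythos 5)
The paper gives no argument for this lemma at all: its ``proof'' is a citation to Remarks~3.5 and 3.9 of the author's earlier paper on this exponential sum, whose method is essentially reproduced in Section~5 of the present paper (Lemmas \ref{Lemma61}--\ref{Lemma63}). Your outline runs through the same circle of ideas: resolve the additive character into Dirichlet characters via Gauss sums (the principal character contributing $\tau(\bar\chi_0)=\mu(q_1)$, the rest $O(q_1^{1/2})$), recognize the twisted Dirichlet series $L(s,\bar\chi)L(s,\mathrm{sym}^2\Phi\otimes\bar\chi)/L(2s,\bar\chi^2)$, and extract the main term by Perron/Mellin inversion, a contour shift, and convexity bounds; your exponent $3/4$ is exactly the degree-four convexity balance, and your identification of the density $\kappa_q=C_\Phi\prod_{p\mid q}\frac{p-1}{p+1}\left(1-\frac{\lambda(p)^2-2}{p}+\frac{1}{p^2}\right)$ with $C_\Phi\prod_{p\mid q}\left(\sum_{e}\lambda(p^e)^2p^{-e}\right)^{-1}$ is correct. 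The only structural difference from the paper's route is cosmetic: you peel off the full $q$-smooth part $k_0\mid q^\infty$ and use multiplicativity of $\lambda^2$ outright, whereas the paper (following the MRT decomposition) writes $k=q_0m_1$ with $q_0=\gcd(k,q)$, $(m_1,q/q_0)=1$, and keeps the weight $\alpha(q_0m_1)$ unfactored inside the Dirichlet series. Both decompositions lead to the same analysis.

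There is, however, one concrete error in your write-up, at precisely the spot you flagged as delicate. Under your constraint $\gcd(k_0,q)=q_0$, the exponent of a prime $p$ with $p^l\|q_0$ is free (producing the full series $\lambda(p^l)^2+\lambda(p^{l+1})^2p^{-1}+\cdots$) only when $v_p(q_0)=v_p(q)$, i.e.\ when $p\nmid q_1$; if $p\mid\gcd(q_0,q_1)$ --- which can happen only for non-squarefree $q$, e.g.\ $q=p^2$ with $(q_0,q_1)=(p,p)$ --- then $v_p(k_0)$ is pinned to $l$ and the local factor is just $\lambda(p^l)^2$. So your claim that the regrouped sum equals $q_0^{-1}\prod_{p^l\|q_0}\left(\lambda(p^l)^2+\lambda(p^{l+1})^2p^{-1}+\cdots\right)$ is false in general. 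A direct check at $q=p^2$ (split $k$ by $v_p(k)$: the part $v_p(k)=1$ sees the Ramanujan sum $\mu(p)$, the part $v_p(k)\ge 2$ sees trivial characters) gives the coefficient $\kappa_q\left(\sum_{j\ge2}\lambda(p^j)^2p^{-j}-\frac{\lambda(p)^2}{p(p-1)}\right)$: only $\lambda(p)^2$, not $\lambda(p)^2+\lambda(p^2)^2p^{-1}+\cdots$, appears in the $(q_0,q_1)=(p,p)$ term. Note that the displayed formula for $w_{\Phi,q_0,q_1}$ in the lemma, read literally, has the same defect, so your computation ``matches'' the stated $D_q$ only because both commit the same oversight; for squarefree $q$ the two agree and your argument is sound, and in any case the discrepancy does not affect the bound $|D_q|\ll_{\Phi,\epsilon}q^{-1+\epsilon}$, which is all the paper uses downstream.
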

\begin{proof}
See \cite[Remark 3.5, Remark 3.9]{Kim2022s}. 
\end{proof}
\noindent By applying Deligne's bound $\lambda(p) \leq 2$ (see \cite{Del1} or \cite[Section 14.9]{IK1}), it is easy to show that  
$$
\left|D_q\right| \ll_\Phi  \frac{d_2(q)(\log q)^7}{q}
$$
(see \cite[(3.9)]{Kim2022s}). 
We are ready to get the approximate Fourier series $\psi_{n,q}(x)$ of $T_{n}(x).$ 
\begin{Lemma}\label{2lemma}
Assume the conditions in Lemma \ref{1lemma}. Suppose that $|x-\frac{a}{q}|\leq 1/Q_{n,\mathcal{S}}.$ Then

\begin{equation}
\Lambda_{n}(x)= D_{q} \sum_{m=1}^{n} e\left((x-a/q) m\right) +O_{\Phi,\epsilon}\left(n^{3/4+\epsilon}q^{5+\epsilon}\left(1+nQ_{n,\mathcal{S}}^{-1}\right)\right) 
\nonumber\end{equation}
\end{Lemma}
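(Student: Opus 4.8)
The plan is to transfer the asymptotics at the rational point $a/q$ furnished by Lemma \ref{1lemma} to the nearby point $x=a/q+\beta$, where $\beta:=x-a/q$ satisfies $|\beta|\le 1/Q_{n}$, by partial summation. Write $a_{k}:=\lambda(k)^{2}e(ka/q)$ and $g(t):=e(\beta t)$, so that $\Lambda_{n}(x)=\sum_{k=1}^{n}a_{k}g(k)$, and set $A(t):=\sum_{k\le t}a_{k}=\Lambda_{\lfloor t\rfloor}(a/q)$. First I would record that Lemma \ref{1lemma}, applied with the \emph{same} pair $(a,q)$ but with every integer upper limit $m\le n$ in place of $n$, gives $A(t)=D_{q}t+O_{\Phi}\!\left(q^{5}n^{3/4+\epsilon}\right)$ uniformly for $1\le t\le n$; here the replacement of $\lfloor t\rfloor$ by $t$ costs only $|D_{q}|\{t\}\ll_{\Phi,\epsilon}q^{-1+\epsilon}$, which is harmless once I invoke the bound $|D_{q}|\ll_{\Phi}d_{2}(q)(\log q)^{7}/q$ recorded after Lemma \ref{1lemma}.

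Next I would apply Abel summation in the form
$$\Lambda_{n}(x)=\sum_{k=1}^{n}a_{k}g(k)=A(n)g(n)-\int_{1}^{n}A(t)\,g'(t)\,dt,$$
where $g'(t)=2\pi i\beta\,e(\beta t)$ obeys $|g'(t)|\le 2\pi|\beta|\le 2\pi/Q_{n}$. Substituting $A(t)=D_{q}t+E(t)$ with $E(t)=O_{\Phi}(q^{5}n^{3/4+\epsilon})$ splits the right-hand side into a main term coming from $D_{q}t$ and an error term coming from $E$.

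For the main term, a second integration by parts gives $D_{q}\bigl(ng(n)-\int_{1}^{n}t\,g'(t)\,dt\bigr)=D_{q}\bigl(g(1)+\int_{1}^{n}g(t)\,dt\bigr)$, and a one-line Euler--Maclaurin comparison of the integral with its Riemann sum, $\bigl|\sum_{m=1}^{n}g(m)-g(1)-\int_{1}^{n}g(t)\,dt\bigr|\ll n\sup_{t\in[1,n]}|g'(t)|\ll n/Q_{n}$, shows this equals $D_{q}\sum_{m=1}^{n}e((x-a/q)m)$ up to $O(|D_{q}|n/Q_{n})=O_{\Phi,\epsilon}(q^{-1+\epsilon}n/Q_{n})$, which is exactly the claimed principal term. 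For the error term, the uniform bound on $E$ together with $|g(n)|=1$ and $|g'(t)|\le 2\pi/Q_{n}$ yields $|g(n)E(n)|+\int_{1}^{n}|E(t)|\,|g'(t)|\,dt\ll_{\Phi}q^{5}n^{3/4+\epsilon}\bigl(1+n/Q_{n}\bigr)$. Collecting the two contributions gives the stated estimate after absorbing the $O(q^{-1+\epsilon}n/Q_{n})$ remainder into $q^{5+\epsilon}n^{3/4+\epsilon}(1+nQ_{n}^{-1})$.

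I do not expect a genuine analytic obstacle: the whole argument is partial summation followed by Euler--Maclaurin, exactly as foreshadowed before the statement. The only points demanding care are the \emph{uniformity} of Lemma \ref{1lemma} over all partial sums $m\le n$, so that $A(t)=D_{q}t+O_{\Phi}(q^{5}n^{3/4+\epsilon})$ holds with a single $n$-dependent error across the full range of integration, and the honest tracking of the factor $1+nQ_{n}^{-1}$, which arises solely from the length-$n$ integral of $g'$ weighted by the size of $A(t)$. Both are bookkeeping rather than substance.
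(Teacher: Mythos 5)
Your proof is correct, but its logical route differs from the paper's. The paper does not perform the partial summation itself: it cites \cite[Lemma 3.8]{kim2021asymptotics}, which already supplies the twisted asymptotic $\sum_{m\le n}\lambda(m)^{2}e(am/q)e(\beta m)=\int_{0}^{n}D_{q}e(\beta x)\,dx+O_{\epsilon}\left((|\beta|+1/n)\,n^{7/4+\epsilon/8}q^{5+\epsilon/2}\right)$, and then finishes with a single Euler--Maclaurin step replacing the integral by $\sum_{m=1}^{n}e(\beta m)$ at cost $O(1+|\beta|n)$. You instead rebuild that input from Lemma \ref{1lemma} alone: applying it uniformly to every partial sum (valid, since its implied constant does not depend on the cutoff and $m^{3/4+\epsilon}\le n^{3/4+\epsilon}$ for $m\le n$), running Abel summation against $g(t)=e(\beta t)$, integrating by parts to identify the main term $D_{q}\left(g(1)+\int_{1}^{n}g(t)\,dt\right)$, and comparing that integral with its Riemann sum. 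The steps all check: the Abel and integration-by-parts identities are exact, the error contribution is $\ll q^{5}n^{3/4+\epsilon}(1+n/Q_{n})$, and the extra remainder $O(q^{-1+\epsilon}n/Q_{n})$ from the Riemann-sum comparison is absorbed because $q^{-1+\epsilon}\le q^{5+\epsilon}n^{3/4+\epsilon}$. What the paper's citation buys is brevity; what your argument buys is self-containedness --- the lemma then rests only on Lemma \ref{1lemma} rather than on a second, stronger estimate imported from \cite{kim2021asymptotics} --- and it is in fact exactly the strategy the paper sketches in prose before the lemma (attach $e(\beta m)$ by integration by parts, then apply Euler--Maclaurin) but does not carry out explicitly.
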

\begin{proof}
Let $\beta= x-a/q.$
By \cite[Lemma 3.8]{Kim2022s},
$$
\sum_{1 \leq m \leq n} \lambda(m)^2 e\left(\frac{a}{q} m\right) e(\beta m) 
=\int_0^{n} D_{q} e(\beta x) d x+O_\epsilon\left((|\beta|+1/n) n^{7/4+\epsilon/8} q^{5+\epsilon/2}\right).
$$
Using Lemma \ref{TEMF}, we see that  
\begin{equation}\begin{split}\int_0^{n} e(\beta x) d x- \sum_{m=1}^{n} e(\beta m)  &=O\left(1+\int_{0}^{n} |\beta| dx \right)\\&= O\left(1+|\beta| n  \right).\end{split}\end{equation}
Therefore, we have 
\begin{equation}
\Lambda_{n}(x)= D_{q} \sum_{m=1}^{n} e\left((x-a/q) m\right) +O_{\Phi,\epsilon}\left(n^{3/4+\epsilon}q^{5+\epsilon}\left(1+nQ_{n,\mathcal{S}}^{-1}\right)\right). 
\nonumber\end{equation}
\end{proof}
\noindent Recall that  $$M_{n,\mathcal{S}}:= \{x \in [0,1] : |x-a/q|\leq 1/Q_{n,\mathcal{S}} \;\;\textrm{for some } \;\; 0 \leq a \leq q \leq P_{n,\mathcal{S}}\},$$
$$m_{n,\mathcal{S}}:= [0.1] \cap M_{n,\mathcal{S}}^{c}.$$

\begin{Cor}\label{31}
Assume that $x \in M_{n,\mathcal{S}},$ $|x-a/q| \leq \frac{1}{Q_{n,\mathcal{S}}}$ where $0\leq a \leq q \leq P_{n,\mathcal{S}}.$ Then
$$|T_{n}(x)-\psi_{n,q}(x-a/q)| \ll_{\Phi,\epsilon} P_{n,\mathcal{S}}^{5+\epsilon}n^{-1/4+\epsilon}\left(1+nQ_{n,\mathcal{S}}^{-1}\right).$$
\end{Cor}
\begin{proof} The proof directly follows from Lemma \ref{2lemma}.
\end{proof}

\noindent For the minor arc estimates, we apply Lemma \ref{1lemma}.
\begin{Lemma}\label{3lemma}
Assume that for all $1 \leq a \leq q \leq P_{n,\mathcal{S}},$ we have $|x-a/q|>\frac{1}{Q_{n,\mathcal{S}}}.$ 
Then
$$\Lambda_{n}(x)=O_{\Phi,\epsilon}\left(P_{n,\mathcal{S}}^{\epsilon-1} n + n^{2}/P_{n,\mathcal{S}}Q_{n,\mathcal{S}} + P_{n,\mathcal{S}}^{5+\epsilon} n^{3/4+\epsilon}\right).$$
\end{Lemma}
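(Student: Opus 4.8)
The plan is to transfer the rational-point asymptotics of Lemma \ref{1lemma} to the irrational point $x$ by Dirichlet approximation followed by partial summation, mirroring the passage from Lemma \ref{1lemma} to Lemma \ref{2lemma} but now carrying the denominator through the estimates. First I would fix $x\in m_n$ and apply Dirichlet's theorem with parameter $Q_n$ to produce coprime integers $a,q$ with $1\le q\le Q_n$ and $|x-a/q|\le 1/(qQ_n)$. The minor-arc hypothesis then forces $q>P_n$: were $q\le P_n$ we would have $|x-a/q|\le 1/(qQ_n)\le 1/Q_n$, contradicting the standing assumption $|x-a/q|>1/Q_n$. Writing $\beta=x-a/q$, this approximation also yields the crucial smallness $n|\beta|\le n/(qQ_n)\le n/(P_nQ_n)\ll 1$, which is exactly what keeps the summation-by-parts error under control.

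Next, with $S(t):=\Lambda_{[t]}(a/q)$ and Lemma \ref{1lemma} in the form $S(t)=D_q t+O_\Phi(q^5 t^{3/4+\epsilon})$, I would sum by parts against the smooth factor $e(\beta m)$ to obtain
$$\Lambda_n(x)=D_q\sum_{m=1}^n e(\beta m)+O_\Phi\!\big(q^5 n^{3/4+\epsilon}(1+n|\beta|)\big)=D_q\sum_{m=1}^n e(\beta m)+O_\Phi\!\big(q^5 n^{3/4+\epsilon}\big),$$
where the last equality uses $n|\beta|\ll 1$. For the main term I would feed in $|D_q|\ll_{\Phi,\epsilon} q^{-1+\epsilon}$ (from the $d_2(q)(\log q)^7/q$ bound recorded after Lemma \ref{1lemma}) together with $\big|\sum_{m\le n}e(\beta m)\big|\le\min\!\big(n,(2\|\beta\|)^{-1}\big)$. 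Since for this approximation $(2|\beta|)^{-1}\ge qQ_n/2>n$, the geometric sum contributes $\le n$, so the main term is $\ll q^{-1+\epsilon}n\le P_n^{\epsilon-1}n$, the first term; the remaining error is $\ll q^5 n^{3/4+\epsilon}$, the third term.

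The second term $n^2/(P_nQ_n)$ is of a different nature: it is the sharp geometric bound, obtained by instead approximating $x$ by a rational of denominator $q'\le P_n$, for which the minor-arc lower bound $|\beta'|>1/Q_n$ is available; then $\big|\sum_{m\le n}e(\beta' m)\big|\le (2|\beta'|)^{-1}<Q_n/2$ and $|D_{q'}|\ll q'^{-1}\ll P_n^{-1}$ combine to give $\ll n^2/(P_nQ_n)$. The hard part will be reconciling these two maneuvers. With the small-denominator approximation the main term is sharp but the partial-summation error reacquires the factor $1+n|\beta'|$, which can be as large as $n/P_n$; with the large-denominator approximation $n|\beta|\ll1$ but $q^5$ overshoots the target $P_n^{5+\epsilon}n^{3/4+\epsilon}$ once $q$ is large. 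I expect the genuine obstacle to be the intermediate regime, where a single minor-arc $x$ is approximated only by denominators $q$ for which neither $q^5 n^{3/4}$ nor the small-denominator error is admissible; resolving it should require not a single Dirichlet approximation but a case analysis over consecutive convergents $q'\le P_n<q$, exploiting that $|D_q|$ is correspondingly small whenever $q$ is large so that the sharp main-term bound and a coarser replacement for the asymptotic step can be balanced. The three terms in the statement are precisely the outputs of these three regimes: the trivial geometric bound, the sharp geometric bound, and the rational-point error of Lemma \ref{1lemma}.
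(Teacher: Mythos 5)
Your opening moves are exactly the paper's: Dirichlet approximation with parameter $Q_n$ produces $(a,q)=1$ with $q\le Q_n$ and $|x-a/q|\le 1/(qQ_n)$; the minor-arc hypothesis forces $q>P_n$; Lemma \ref{1lemma} is applied at $a/q$; and the main term is handled by $|D_q|\ll_{\Phi,\epsilon}q^{-1+\epsilon}\le P_n^{-1+\epsilon}$ together with the trivial bound $n$ for the geometric sum. The only technical difference is the transfer from $a/q$ to $x$: you sum by parts (as in Lemma \ref{2lemma}), while the paper uses the cruder inequality $|e(kx)-e(ka/q)|\le 2\pi k|x-a/q|\le 2\pi k/(P_nQ_n)$, and that crude step is the sole source of the term $n^{2}/(P_nQ_n)$ in the statement; it is not a separate small-denominator maneuver as you guessed. (Your sketch of that maneuver also reverses an inequality: for $q'\le P_n$ one has $|D_{q'}|\ll q'^{-1+\epsilon}$, which is $\ge P_n^{-1+\epsilon}$, not $\ll P_n^{-1}$. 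This is immaterial, because the lemma is an upper bound and your partial-summation route never needs to produce that term at all.)

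The obstacle you flag in the large-$q$ regime is a genuine gap in your proposal: your argument gives $\Lambda_n(x)\ll_{\Phi,\epsilon}P_n^{\epsilon-1}n+q^{5}n^{3/4+\epsilon}$ with $q$ anywhere in $(P_n,Q_n]$, and $q^{5}n^{3/4+\epsilon}$ is dominated by the right-hand side of the lemma only when $q\ll n^{1/20-\epsilon}$; beyond that threshold Lemma \ref{1lemma} is vacuous (its error term exceeds the trivial bound $\Lambda_n(a/q)\le\Lambda_n\ll_{\Phi} n$), no coarser substitute is available in the paper, and so your proposed repair via consecutive convergents and the decay of $D_q$ has nothing to run on. As written, the proposal therefore does not prove the lemma. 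You should know, however, that the paper's own proof has exactly the same hole, silently: it applies Lemma \ref{1lemma} at the Dirichlet denominator $q\le Q_n$ and records the resulting error as $P_n^{5+\epsilon}n^{3/4+\epsilon}$, a step that is justified only when $q\ll P_n^{1+\epsilon/5}$, whereas $q$ may be as large as $Q_n$. The proof pattern is imported from Cuny--Weber's Lemma 5.4 for $d_2(k)$, where the analogous rational-point estimate has much milder $q$-dependence and so remains useful essentially up to $q=Q_n$; the $q^{5}$ dependence of Lemma \ref{1lemma} destroys precisely this feature. Closing the regime $n^{1/20}\ll q\le Q_n$ would require a bound on $\sum_{m\le n}\lambda(m)^{2}e(mx)$ near large-denominator rationals that is uniform (or slowly growing) in the denominator, an input of Wilton/Voronoi type that neither you nor the paper supplies. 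In short: where you commit, you match the paper; where you stop, the paper stops too, only without saying so.
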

\begin{proof}
The proof follows from the proof of \cite[Lemma 5.4]{CunyWeber}. By the Dirichlet principle, $|x-a/q|\leq 1/qQ_{n,\mathcal{S}}$ for some $1 \leq a \leq q \leq Q_{n,\mathcal{S}}.$ Because of our assumption, $q>P_{n,\mathcal{S}}.$ 
Since $|e(kx)- e(ka/q)| \leq 2\pi k|x-a/q| \leq 2\pi k/P_{n,\mathcal{S}}Q_{n,\mathcal{S}},$ we have 
\begin{equation}\begin{split}|\Lambda_{n}(x)| &\leq |\Lambda_{n}(a/q)|+ \frac{2\pi}{P_{n,\mathcal{S}}Q_{n,\mathcal{S}}} \sum_{1 \leq k \leq n} k \lambda(k)^{2} 
\\&\ll_{\Phi,\epsilon} |D_{q}|n +P_{n,\mathcal{S}}^{5+\epsilon} n^{3/4+\epsilon} +n^{2}/P_{n,\mathcal{S}}Q_{n,\mathcal{S}}.   \end{split}\end{equation}
Since $q>P_{n,\mathcal{S}},$ we have $D_{q} \ll_{\Phi,\epsilon} P_{n,\mathcal{S}}^{\epsilon-1}.$
The proof is completed. \end{proof}
\begin{Cor}\label{32}
For any $x\in m_{n,\mathcal{S}},$ 
\begin{equation}\label{minor} |T_{n}(x)|=O_{\Phi,\epsilon}\left(P_{n,\mathcal{S}}^{\epsilon-1} + n/P_{n,\mathcal{S}}Q_{n,\mathcal{S}} + P_{n,\mathcal{S}}^{5+\epsilon} n^{-1/4+\epsilon}\right).\end{equation}
\end{Cor}
\begin{proof}
The proof directly follows from Lemma \ref{3lemma}.
\end{proof}
\begin{Lemma}\label{4lemma} For any $q \in \mathbb{N},$
$$|\psi_{n,q}(x)| \ll_{\Phi,\epsilon} (q+1)^{-1+\epsilon} \min\left(1, 1/n\|x\|\right).$$ 
\end{Lemma}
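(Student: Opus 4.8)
The plan is to read the bound straight off the definition $\psi_{n,q}(x)=\frac{D_{q}}{\Lambda_{n}}\sum_{m=1}^{n}e(mx)$ and to estimate the three constituent pieces — the exponential sum, the arithmetic coefficient $D_{q}$, and the normalizing denominator $\Lambda_{n}$ — separately. First I would dispose of the exponential sum by summing the geometric series and using $|\sin\pi x|\ge 2\|x\|$, which gives the standard Dirichlet-kernel bound
$$\Big|\sum_{m=1}^{n}e(mx)\Big|=\Big|\frac{\sin(\pi n x)}{\sin(\pi x)}\Big|\le \min\Big(n,\frac{1}{2\|x\|}\Big).$$
An elementary case distinction (according to whether $n\|x\|\le \tfrac12$ or $n\|x\|>\tfrac12$) then shows $\min\big(n,\tfrac{1}{2\|x\|}\big)\le n\min\big(1,\tfrac{1}{n\|x\|}\big)$, which already isolates the factor $n\min(1,1/n\|x\|)$ that I want to see in the final estimate.

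Next I would control the coefficient. For $q\ge 1$ I would invoke the bound $|D_{q}|\ll_{\Phi} d_{2}(q)(\log q)^{7}/q$ recorded just after Lemma \ref{1lemma}, and absorb the divisor and logarithmic factors into an arbitrary power of $q$ via $d_{2}(q)(\log q)^{7}\ll_{\epsilon}q^{\epsilon}$, yielding $|D_{q}|\ll_{\Phi,\epsilon}q^{-1+\epsilon}\asymp (q+1)^{-1+\epsilon}$. The presence of $(q+1)$ rather than $q$ in the statement is solely to accommodate the central term $q=0$, where $D_{0}=C_{\Phi}$ and hence $|D_{0}|\ll_{\Phi}1=(q+1)^{-1+\epsilon}$; thus in all cases $|D_{q}|\ll_{\Phi,\epsilon}(q+1)^{-1+\epsilon}$. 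For the denominator I would use $\Lambda_{n}=C_{\Phi}n+O_{\Phi}(n^{3/5-1/560+o(1)})$ from Remark \ref{lastremark}, so that $\Lambda_{n}\gg_{\Phi}n$ for all sufficiently large $n$, while the finitely many small $n$ (for which $\Lambda_{n}\ge \lambda(1)^{2}=1$) are swept into the implied constant; hence $1/\Lambda_{n}\ll_{\Phi}1/n$ uniformly in $n\ge 1$.

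Combining the three estimates gives
$$|\psi_{n,q}(x)|=\frac{|D_{q}|}{\Lambda_{n}}\Big|\sum_{m=1}^{n}e(mx)\Big|\ll_{\Phi,\epsilon}(q+1)^{-1+\epsilon}\cdot\frac{1}{n}\cdot n\min\Big(1,\frac{1}{n\|x\|}\Big)=(q+1)^{-1+\epsilon}\min\Big(1,\frac{1}{n\|x\|}\Big),$$
which is exactly the claim. I do not expect a genuine obstacle here: the argument is purely mechanical, and the only points meriting a moment's care are the elementary reduction $\min(n,\tfrac{1}{2\|x\|})\le n\min(1,\tfrac{1}{n\|x\|})$ and the bookkeeping that makes $\Lambda_{n}\gg_{\Phi}n$ legitimate for every $n\ge 1$ (so that the factor $1/n$ is valid without an artificial lower bound on $n$).
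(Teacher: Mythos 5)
Your proof is correct and follows essentially the same route as the paper: the paper's own (very terse) proof also combines the bound $|D_q|\ll_{\Phi,\epsilon} q^{-1+\epsilon}$ with the elementary geometric-series bound $\bigl|\sum_{m=1}^{n}e(mx)\bigr|\ll\min(n,1/\|x\|)$ and the normalization $\Lambda_n\asymp_\Phi n$. Your write-up merely makes explicit the bookkeeping the paper leaves implicit (the $q=0$ term, the $\min$ manipulation, and the small-$n$ case), so there is nothing to fix.
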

\begin{proof}
Note that $|D_{q}|\ll q^{-1+\epsilon}.$ By using the elementary bound
$$\sum_{m=1}^{n} e (\beta n) \leq 2 \min (n, 1/\|\beta\|), $$
the proof is completed.
\end{proof}
\begin{Prop}\label{34} Assume  $n \geq 1, q \geq 0.$ Then 
$$\sum_{j \geq 1} \left\| \sup_{\substack{N_{j} \leq N \leq N_{j+1}\\ N \in I_{\rho}}} |(w_{N,q}-w_{N_{j},q})\ast g \right\|_{l^{2}}^{2} \ll_{\Phi,\epsilon} (q+1)^{-1+\epsilon}\|g\|_{l^{2}}^{2}.$$
\end{Prop}
\begin{proof}
Since $$D_{q} \ll_{\Phi,\epsilon} (q+1)^{-1+\epsilon},$$ 
the proof directly follows from \eqref{difference}.
\end{proof}
\begin{Prop}\label{33}
Let $x\in [0,1].$ 
Then 
\begin{equation}
|T_{n}(x)-\varphi_{n}(x)| \ll_{\Phi,\epsilon} (\log n)^{-\mathcal{S}+\epsilon}.
\end{equation} 
\begin{proof} Recall that
$$\varphi_{n}(x):= \psi_{n,0}(x)\eta_{0}(x) + \sum_{s=1}^{\infty} \sum_{2^{s-1} \leq q <2^{s}} \sum_{1 \leq a \leq q, (a,q)=1} \psi_{n,q}(x-a/q)\eta_{s}(x-a/q).$$
The proof follows from the proof of \cite[Lemma 7.1]{CunyWeber}. 
Let us consider the major arcs $M_{n,\mathcal{S}}$ first. Let $x \in M_{n,\mathcal{S}}.$ If $x=0,$ then $|x-a_{s}/q_{s}|=a_{s}/q_{s}$ where $2^{s-1} \leq q_{s} \leq 2^{s}, 1 \leq a_{s} \leq q_{s},$ and $(a_{s},q_{s})=1.$  
Since $a_{s}/q_{s} \geq 1/2^{s},$
$$\eta_{s}(-a/q)=0.$$ Therefore, 
$$\varphi_{n}(0)=\psi_{n,0}.$$ 
If $x \in M_{n,\mathcal{S}}$ and $x \neq 0,$ then there's a pair $(a,q)$ such that $1 \leq a \leq q \leq P_{n,\mathcal{S}},$ $|x-\frac{a}{q}|\leq \frac{1}{Q_{n,\mathcal{S}}}.$ 
Now let's consider the summation over $s.$ 
Assume that $s' \geq 1$ satisfies $2^{s'-1} \leq q < 2^{s'}.$ 
Then 
$$|x-a/q| \leq 1/Q_{n,\mathcal{S}} \leq 1/8MP_{n,\mathcal{S}}^{2} \leq 1/2M4^{s'} \Rightarrow \eta_{s'}(x-a/q ) =1.$$
Therefore, we have 
$$ \psi_{n,q}(x-a/q)\eta_{s'}(x-a/q)=  \psi_{n,q}(x-a/q).$$
For $(a_{s'},q_{s'}) \neq (a,q)$ where $2^{s'-1} \leq q_{s'} <2^{s'}, 1 \leq a_{s'} \leq q_{s'},$ and $ (a_{s'},q_{s'})=1,$ we have  
\begin{equation}\label{work}|x-a_{s'}/q_{s'}| \geq |a/q-a_{s'}/{q_{s'}}|-|x-a/q| \geq 1/P_{n,\mathcal{S}}^{2}-1/Q_{n,\mathcal{S}} \geq (8M-1)/Q_{n,\mathcal{S}}.\end{equation}
Using Lemma \ref{4lemma}, we see that  
$$\psi_{n,q_{s'}}(x-a_{s'}/q_{s'}) \ll_{\Phi,\epsilon} q_{s'}^{-1+\epsilon} (\log n)^{-\mathcal{S}+\epsilon}.$$
If $s \neq s',$ then for any pair $(a_{s},q_{s})$ such that 
$1 \leq a_{s} \leq q_{s} \leq P_{n,\mathcal{S}}, (a_{s},q_{s})=1,$ we have
$$|x-a_{s}/q_{s}| \geq |a/q-a_{s}/{q_{s}}|-|x-a/q| \geq  (8M-1)/Q_{n,\mathcal{S}}.$$
Therefore,
\begin{equation}\label{equa1}\begin{split}|\varphi_{n}(x)-T_{n}(x)| &\ll_{\Phi,\epsilon} (\log n)^{-\mathcal{S}+\epsilon} +  (\log n)^{-\mathcal{S}+\epsilon} \sum_{s=1}^{\log_{2} P_{n,\mathcal{S}}} 2^{-(1-\epsilon) s} +  \sum_{s= \lfloor \log_{2} P_{n,\mathcal{S}}\rfloor }^{\infty} 2^{-(1-\epsilon) s}
\\&\ll_{\Phi,\epsilon} (\log n)^{-\mathcal{S}+\epsilon}. \end{split}\end{equation}
Now let us consider $x \in m_{n,\mathcal{S}}.$ By the assumption, we see that 
$$|\psi_{n,0}(x)| \ll_{\Phi} Q_{n,\mathcal{S}}/n \ll_{\Phi,\epsilon} (\log n)^{-\mathcal{S}+\epsilon}.$$
%$$|\psi_{n,q}(x-a/q)|\ll_{\Phi,\epsilon} (\log n)^{-1+\epsilon}.$$ 
Using \eqref{minor}, we have 
$$|T_{n}(x)- \varphi_{n}(x)| \leq |T_{n}(x)|+|\varphi_{n}(x)| \ll_{\Phi} (\log n)^{-\mathcal{S}+\epsilon} +   |\varphi_{n}(x)|.$$
By the Dirichlet principle, $|x-a/q|\leq 1/qQ_{n,\mathcal{S}}$ for some $0 \leq a \leq q \leq Q_{n,\mathcal{S}}.$ If $(a',q')=1,$ $a/q \neq a'/q',$ then by using a similar argument as in \eqref{work},
$$\psi_{n,q'}(x-a'/q') \ll_{\Phi,\epsilon} (q'+1)^{-(1+\epsilon)}(\log n)^{-\mathcal{S}+\epsilon}$$ when 
$q' \leq (\log n)/2.$ When $a'=a$ and $q'=q,$
$$ \psi_{n,q'}(x-a'/q') \ll_{\Phi,\epsilon} (q'+1)^{-1+\epsilon}\ll_{\Phi,\epsilon} (\log n)^{-\mathcal{S}+\epsilon}$$ since $q'=q>P_{n}.$
By using a similar argument as in \eqref{equa1}, we have 
$$ \varphi_{n}(x)\ll_{\Phi,\epsilon} (\log n)^{-\mathcal{S}+\epsilon}.$$ 
 \end{proof}
\end{Prop}

\section{Propositions for $d_{v}(k)$} 
In Sections 4 and 5, we will use the simplified notations $P_n$, $Q_n$, $M_n$, and $m_n$ instead of $P_{n,1}$, $Q_{n,1}$, $M_{n,1}$, and $m_{n,1}$, respectivelely.
\begin{Lemma}\label{lemma41}\cite[Proposition 4.2]{MRT1}
Let $A,B,B'>0, v \geq 2,$ and $ n \geq 2.$ Let $x=a/q+\beta$ for some $1 \leq q \leq (\log n)^{B}, (a,q)=1,$ and $|\beta| \leq \frac{(\log n)^{B'}}{n}.$ 
Then
$$\sum_{k \leq n} d_{v}(k)e(kx) = \int_{0}^{n} P_{v,q}(x) e(\beta x) dx + O_{v,A,B} \left(n(\log n)^{-A}\right)$$
where $$P_{v,q}(x)= \sum_{q=q_{0}q_{1}}\frac{\mu(q_{1})}{\phi(q_{1})q_{0}} \frac{d}{dx} \left(\text{Res}_{s=1} \left( \frac{(x/q_{0})^{s}}{s}\sum_{\substack{n\geq 1\\ (n,q_{1})=1}} \frac{d_{v}(q_{0}n)}{n^{s}}\right)\right).$$
Note that $P_{v,q}(x)$ is a polynomial of degree $v-1$ in $\log x.$
\end{Lemma}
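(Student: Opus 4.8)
This is \cite[Proposition 4.2]{MRT1}, so one may simply invoke it; for completeness I indicate the analytic argument behind such a major-arc asymptotic. Throughout write $X \asymp n$ and set $S(t) := \sum_{k \le t} d_v(k) e(ak/q)$. The plan is first to obtain an asymptotic for $S(t)$, uniform in $q \le (\log X)^B$ and in $a$ with $(a,q)=1$, and then to attach the smooth factor $e(\beta k)$ by partial summation: Abel summation turns $\sum_{k \le n} d_v(k) e(ak/q) e(\beta k)$ into $S(n) e(\beta n) - 2\pi i \beta \int_1^n S(t) e(\beta t)\,dt$, and since $|\beta| \le (\log X)^{B'}/X$ the factor $\beta$ keeps the error contributions small while $\int_0^n P_{v,q}(t) e(\beta t)\,dt$ reassembles the claimed main term.

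To obtain $S(t)$ I would pass to the Dirichlet series
$$F_{a,q}(s) := \sum_{k \ge 1} \frac{d_v(k)\, e(ak/q)}{k^s}, \qquad \Re s > 1,$$
and recover $S(t)$ by a truncated Perron formula, with the main term coming from the residue at $s = 1$. The key structural input is the meromorphic continuation of $F_{a,q}$ slightly past $\Re s = 1$: decomposing $k$ according to the interaction of its prime factors with $q$ and detecting the coprimality by M\"obius inversion produces exactly
$$\sum_{q = q_0 q_1} \frac{\mu(q_1)}{\phi(q_1)\, q_0} \sum_{\substack{n \ge 1 \\ (n,q_1)=1}} \frac{d_v(q_0 n)}{n^s},$$
where $\mu(q_1)/\phi(q_1)$ is the evaluation of the additive character $e(a\,\cdot\,/q_1)$ averaged over residues coprime to $q_1$ (a Ramanujan-sum computation). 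Each inner series factors as $\zeta(s)^v$ times a finite Euler product over the primes dividing $q$, so $F_{a,q}$ has a pole of order $v$ at $s=1$; carrying this residue through Perron's formula reproduces, after differentiation in $t$, exactly the density $P_{v,q}$ of the statement, whose integral is the main term of $S(t)$.

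Shifting the Perron contour past $\Re s = 1$ then gives $S(t) = \int_0^t P_{v,q}(u)\,du + (\text{error})$. The hard part is making the error uniform in $q$ up to $(\log X)^B$: after expanding the coprimality conditions in Dirichlet characters modulo $q$ one must push the contour into a zero-free region valid for every $L(s,\chi)$ with $\chi \bmod q$, i.e.\ one needs a Siegel--Walfisz-type region. Because $q$ is only a fixed power of $\log X$, the classical zero-free region together with Siegel's theorem provides a contour on which $\zeta(s)^v$ and the $L(s,\chi)^v$ are bounded by a power of $\log$, giving a saving of size $\exp(-c\sqrt{\log X})$ that comfortably beats any fixed negative power of $\log X$ (the usual price being that the implied constant is ineffective in $A$). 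Combining this with the partial summation in $\beta$ and assembling the $q = q_0 q_1$ pieces completes the estimate.
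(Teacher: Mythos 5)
Your opening move---simply invoking \cite[Proposition 4.2]{MRT1}---is exactly the paper's proof: the paper cites that proposition and adds only that the range of summation there differs from the $k\le n$ range here, an adjustment it dismisses as easy because the argument proceeds by partial integration; your sketch sidesteps even that by working with $k\le n$ from the start. The architecture of your sketch is also sound and matches both the argument behind \cite{MRT1} and the paper's own general-case treatment in Section 5: the decomposition $q=q_0q_1$ with the Ramanujan-sum factor $\mu(q_1)/\phi(q_1)$ is Lemma \ref{Lemma61}, the extraction of the order-$v$ pole giving the density $P_{v,q}$ is Lemma \ref{Lemma62}, and the non-principal character contributions are Lemma \ref{Lemma63}.

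There is, however, one genuine error in your justification of the key uniformity step. You claim that uniformity in $q\le(\log X)^B$ forces a contour shift into a zero-free region for every $L(s,\chi)$ with $\chi \bmod q$, i.e.\ a Siegel--Walfisz input, with Siegel's theorem rendering the constants ineffective. For divisor-function weights this is neither needed nor correct. After the character expansion, the non-principal contribution has Dirichlet series $L(s,\chi)^v$ times a finite Euler product over $p\mid q$; this is \emph{entire}, so there is no pole to avoid and no secondary main term to rule out---one needs only \emph{upper} bounds for $|L(s,\chi)|^v$ on the shifted contour, which follow from convexity (or from the elementary bound $L(\sigma+it,\chi)\ll \log\left(q(2+|t|)\right)$ for $\sigma$ near $1$) and are completely insensitive to the location of zeros: zero-free regions control $1/L$ and $L'/L$, not $L$ itself. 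This is precisely how the paper's Lemma \ref{Lemma63} handles the general case, using only holomorphy of the twisted series and polynomial growth (conditions \textit{(3)} and \textit{(4)} of Proposition \ref{generalprop}), with an effective, power-saving error $\ll (n/2q_0)^{4/5}q_1^{b_2}$. Siegel-type ineffectivity is the price paid for weights like $\Lambda(k)$ or $\mu(k)$, not for $d_v(k)$; as written, your sketch proves a true bound but attributes it to the wrong input and would mislead a reader into believing the implied constant in the lemma is ineffective, when it is not.
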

\begin{proof} See \cite[Proposition 4.2]{MRT1}, where the proof is done through partial integration. The only modification required for this is adjusting the range of the summation, which can be easily accomplished.

\end{proof} 
\noindent We set $B=1-\epsilon, B'=2(1-\epsilon)$ and $A=2022.$ 
Since 
$$\sum_{\substack{n\geq 1\\ (n,q_{1})=1}} \frac{d_{v}(q_{0}n)}{n^{s}} \leq q_{0}^{s}\sum_{n\geq 1} \frac{d_{v}(n)}{n^{s}} \;\; \textrm{for} \;\; s>1,$$
we have
$$P_{v,q}(x) = \sum_{i=1}^{v} C_{q,i} (\log x)^{v-i}$$ 
for some constants $C_{q,i}$ and satisfy $|C_{q,i}|\ll_{\epsilon} q^{-1+\epsilon}$.
\begin{Prop}\label{Prop42}
Let $x \in M_{n},$ $|x-a/q| \leq \frac{1}{Q_{n}}$ where $0\leq a \leq q \leq P_{n}.$ Then, we have
$$|T^{\ast}_{n}(x)-\psi^{\ast}_{n,q}(x-a/q) |\ll_{v,\epsilon} (\log n)^{-1+\epsilon}.$$ 
For $x \in m_{n},$  we have 
 \begin{equation} |T^{\ast}_{n}(x)| \ll_{\epsilon} (\log n)^{-1+\epsilon}. \nonumber\end{equation}

\end{Prop}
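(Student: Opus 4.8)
The plan is to prove Proposition \ref{Prop42} by following exactly the template already established for $\lambda(k)^2$ in Propositions \ref{31} and \ref{32}, with Lemma \ref{lemma41} playing the role that Lemmas \ref{1lemma}--\ref{3lemma} played in the Hecke case. The key structural fact I would exploit is that $T^{\ast}_{n}(x) = \mathcal{D}_{n}(x)/\mathcal{D}_{n}$ and $\mathcal{D}_{n} = nP_{v}(\log n) + O(n^{1-1/v})$ has size $\asymp n(\log n)^{v-1}$, so the normalization introduces a factor of order $(\log n)^{-(v-1)}$; combined with the bound $|C_{q,i}| \ll_{\epsilon} q^{-1+\epsilon}$ on the coefficients of $P_{v,q}$, this is what produces the target saving of $(\log n)^{-1+\epsilon}$.

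For the major-arc bound, I would first apply Lemma \ref{lemma41} with the choices $B = 3(1-\epsilon)$, $B' = 2(1-\epsilon)$, $A = 2022$ already fixed in the text, which are compatible with $q \leq P_{n} = (\log n)^{3(1-\epsilon)}$ and $|\beta| = |x - a/q| \leq 1/Q_{n} = (\log n)^{2(1-\epsilon)}/n$. This gives
\begin{equation}
\mathcal{D}_{n}(x) = \int_{0}^{n} P_{v,q}(t)\,e(\beta t)\,dt + O_{v}\!\left(n(\log n)^{-2022}\right).\nonumber
\end{equation}
Next I would replace the integral against the polynomial $P_{v,q}(t)$ by the corresponding sum $E_{q,n}\sum_{m=1}^{n} e(\beta m)$ appearing in $\psi^{\ast}_{n,q}$, using the Euler--Maclaurin comparison exactly as in Lemma \ref{2lemma}; the difference between $\int_0^n$ and $\sum_{m=1}^n$ costs only $O(1 + |\beta|\, n\, (\log n)^{v-1})$, which after dividing by $\mathcal{D}_{n} \asymp n(\log n)^{v-1}$ is negligible against $(\log n)^{-1+\epsilon}$. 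Dividing through by $\mathcal{D}_{n}$ then turns the main term into $\psi^{\ast}_{n,q}(x - a/q)$ and the error terms into quantities bounded by $(\log n)^{-1+\epsilon}$, since the enormous power-of-log saving from $A = 2022$ swamps the $(\log n)^{v-1}$ factors and the error from the arc width.

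For the minor-arc bound $x \in m_{n}$, I would mimic Lemma \ref{3lemma}: by the Dirichlet principle write $|x - a/q| \leq 1/(qQ_{n})$ with $q \leq Q_{n}$, so the hypothesis $x \in m_{n}$ forces $q > P_{n}$. I would then either transfer to $\mathcal{D}_{n}(a/q)$ via $|e(kx) - e(ka/q)| \ll k/(P_nQ_n)$ together with the trivial bound $\sum_{k\le n} k\, d_v(k) \ll n^2 (\log n)^{v-1}$, or apply Lemma \ref{lemma41} directly with $q$ just above $P_n$; in either case the main term is governed by $|C_{q,i}| \ll q^{-1+\epsilon} \ll P_{n}^{-1+\epsilon} = (\log n)^{-3(1-\epsilon)(1-\epsilon)}$, and after normalizing by $\mathcal{D}_{n}$ this comfortably absorbs the $(\log n)^{v-1}$ to yield $(\log n)^{-1+\epsilon}$. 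I expect the main obstacle to be purely bookkeeping: verifying that the specific choices $B, B', A$ keep every error term uniformly below the $(\log n)^{-1+\epsilon}$ threshold after the division by $\mathcal{D}_{n}$, and checking that the replacement of the $q$-dependent integral by the clean geometric sum $\sum_{m=1}^n e(\beta m)$ (with the single coefficient $E_{q,n}$ collapsing the several coefficients $C_{q,i}$) is justified uniformly in $q$; neither step is conceptually difficult, but each requires care that the power saving in $A$ is genuinely large enough to dominate all the logarithmic losses.
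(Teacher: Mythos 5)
Your proposal is correct and follows essentially the same route as the paper's proof: on the major arcs, apply Lemma \ref{lemma41} (with $B=3(1-\epsilon)$, $B'=2(1-\epsilon)$, $A=2022$), then use Euler--Maclaurin to pass from $\int_0^n P_{v,q}(t)e(\beta t)\,dt$ to the sum, and collapse $\sum_i C_{q,i}(\log k)^{v-i}$ into the single coefficient $E_{q,n}$ at a cost of $O_{\epsilon}\left(q^{-1+\epsilon}n(\log n)^{v-2}\right)$, which is negligible after dividing by $\mathcal{D}_n \asymp n(\log n)^{v-1}$; on the minor arcs, use the Dirichlet principle and the transfer $|e(kx)-e(ka/q)| \ll k/(P_nQ_n)$ with $q>P_n$, exactly the paper's adaptation of Lemma \ref{3lemma}. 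The only differences are cosmetic (the paper treats the minor arcs first, and its Euler--Maclaurin error is bookkept slightly differently), and your first minor-arc option --- transferring to $\mathcal{D}_n(a/q)$ and bounding it through $E_{q,n}$ --- is precisely what the paper does.
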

\begin{proof} Let $\beta=x-a/q.$ Let us prove the second inequality first. 
Using a similar method as in Lemma \ref{3lemma},
\begin{equation}\begin{split}|\mathcal{D}_{n}(x)| &\leq |\mathcal{D}_{n}(a/q)|+ \frac{2\pi}{P_{n}Q_{n}} \sum_{1 \leq k \leq n} k d_{v}(k) 
\\&\ll |E_{q,n}|n + n/ (\log n)^{-2022} +n^{2}(\log n)^{v-1}/P_{n}Q_{n}  
\\&\ll_{\epsilon}  q^{-1+\epsilon} (\log n)^{v-1} n + n/ (\log n)^{-2022} +n^{2}(\log n)^{v-1}/P_{n}Q_{n}   
\end{split}\end{equation} 
where 
$E_{q,n}:= \sum_{i=1}^{v} C_{q,i} (\log n)^{v-i}.$ 
The proof of the second inequality is completed since we have $q > P_{n} = (\log n)^{3(1-\epsilon)}$ and $Q_{n} = n/(\log n)^{2(1-\epsilon)}$. Now let's consider the first inequality. 
By Lemma \ref{lemma41}, we are left to prove that 
$$\int_{0}^{n} P_{v,q}(x) e(\beta x)dx = E_{q,n} \sum_{k=1}^{n} e(k\beta) +O\left(n(\log n)^{v-2+\epsilon})\right). $$
Using Lemma \ref{TEMF}, we see that 
\begin{equation}\begin{split}\int_{0}^{n} P_{v,q}(x) e(\beta x)dx&= \sum_{k=1}^{n} \sum_{ i=1}^{v} C_{q,i} (\log k)^{v-i}e(k \beta) \\&\;\;\;+ O_{\epsilon,v}\left(q^{-1+\epsilon}(\log n)^{v-1}/2+ q^{-1+\epsilon}n(\log n)^{v-1}\left(\frac{1}{\log n}+ 1/Q_{n}\right)\right).\end{split}\nonumber\end{equation}
Since $C_{q,i} \ll_{\epsilon} q^{-1+\epsilon},$
\begin{equation}\label{difference2}\begin{split}&\sum_{k=1}^{n} \sum_{i=1}^{v} C_{q,i} \left((\log n)^{v-i} - (\log k)^{v-i}\right)e( k\beta) 
\\&\ll_{\epsilon}  \sum_{i=1}^{v} \sum_{k=1}^{n}  q^{-1+\epsilon} \left((\log n)^{v-i} - (\log k)^{v-i}\right)
\\&\ll_{\epsilon}  q^{-1+\epsilon}\left( n\sum_{i=1}^{v}(\log n)^{v-i}- \sum_{i=1}^{v} \sum_{k=1}^{n} (\log k)^{v-i}\right).
\end{split}\end{equation}
Using Lemma \ref{logsum}, we have 
\begin{equation}\begin{split} \sum_{i=1}^{v} \sum_{k=1}^{n} (\log k)^{v-i} &= n\sum_{i=1}^{v}(\log n)^{v-i} + O\left(n\sum_{i=1}^{v} (v-i) (\log n)^{v-i-1}\right) \\&= 
n\sum_{i=1}^{v} (\log n)^{v-i} + O\left(v^{2} n (\log n)^{v-2}\right).\end{split}\nonumber\end{equation}
Therefore, we see that 
$$ q^{-1+\epsilon}\left( n\sum_{i=1}^{v}(\log n)^{v-i}- \sum_{i=1}^{v} \sum_{k=1}^{n} (\log k)^{v-i}\right)\ll_{\epsilon,v} nq^{-1+\epsilon} (\log n)^{v-2}.$$
\end{proof}
\begin{Prop}\label{Prop44} Let $n \geq 1, q \geq 0.$ Then 
$$\sum_{j \geq 1} \left\| \sup_{\substack{N_{j} \leq N \leq N_{j+1}\\ N \in I_{\rho}}} |(w^{\ast}_{N,q}-w^{\ast}_{N_{j},q})\ast g \right\|_{l^{2}}^{2} \ll_{\epsilon} (q+1)^{-1+\epsilon}\|g\|_{l^{2}}^{2}.$$
\end{Prop}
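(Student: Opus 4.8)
The plan is to reduce everything to the oscillation inequality \eqref{difference} for the Ces\`aro kernel, just as in Proposition \ref{34}, but with one extra step to absorb the fact that the normalizing coefficient now depends on $n$. Writing $c_{q,n}:=E_{q,n}\,n/\mathcal{D}_{n}$, the definition of $w^{\ast}_{n,q}$ gives $w^{\ast}_{n,q}=c_{q,n}\,\kappa_{n}$. From $|E_{q,n}|\ll_{\epsilon}q^{-1+\epsilon}(\log n)^{v-1}$ together with $\mathcal{D}_{n}=nP_{v}(\log n)+O(n^{1-1/v})\asymp n(\log n)^{v-1}$ (and $\mathcal{D}_{n}\geq n$ in the bounded range) one checks that $|c_{q,n}|\ll_{\epsilon}q^{-1+\epsilon}$ uniformly in $n$. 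Unlike the $\lambda(k)^{2}$ case, where the coefficient $D_{q}$ was independent of $n$ and Proposition \ref{34} followed at once, here I must control the variation of $c_{q,n}$ with $n$; this is the only genuine difference. I would split
\[
w^{\ast}_{N,q}-w^{\ast}_{N_{j},q}=c_{q,N}\,(\kappa_{N}-\kappa_{N_{j}})+(c_{q,N}-c_{q,N_{j}})\,\kappa_{N_{j}}.
\]

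For the first summand, since $|c_{q,N}|\ll_{\epsilon}q^{-1+\epsilon}$ uniformly I can pull the coefficient out of the supremum pointwise, so that
\[
\sup_{N}\big|c_{q,N}(\kappa_{N}-\kappa_{N_{j}})\ast g\big|\leq q^{-1+\epsilon}\sup_{N}\big|(\kappa_{N}-\kappa_{N_{j}})\ast g\big|,
\]
and summing the squared $l^{2}$ norms over $j$ and invoking \eqref{difference} gives a contribution $\ll_{\epsilon}q^{-2+2\epsilon}\|g\|_{l^{2}}^{2}$, which is acceptable.

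The main obstacle is the second summand, and the key point is that $c_{q,n}$ is slowly varying. I would show that $c_{q,n}$ converges as $n\to\infty$ to $L_{q}:=C_{q,1}/p$, where $p>0$ is the leading coefficient of $P_{v}$, with the quantitative rate $|c_{q,n}-L_{q}|\ll_{\epsilon}q^{-1+\epsilon}/\log n$: in the difference $c_{q,n}-L_{q}$ the top-degree $(\log n)^{v-1}$ terms of numerator and denominator cancel, leaving $O_{\epsilon}(q^{-1+\epsilon}(\log n)^{v-2})$ over a denominator $\gg(\log n)^{v-1}$ (the $O(n^{-1/v})$ error in $\mathcal{D}_{n}$ being negligible against any power of $\log n$). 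By the triangle inequality this yields $\sup_{N_{j}\leq N\leq N_{j+1}}|c_{q,N}-c_{q,N_{j}}|\ll_{\epsilon}q^{-1+\epsilon}/\log N_{j}$, uniformly over the whole block no matter how large $N_{j+1}/N_{j}$ is. Since $\kappa_{N_{j}}$ is independent of $N$, the supremum falls only on the coefficient, and combining this with $\|\kappa_{N_{j}}\ast g\|_{l^{2}}\ll\|g\|_{l^{2}}$ (the $p=2$ case of \eqref{maximalshift}) gives
\[
\sum_{j\geq 1}\Big\|\sup_{N}\big|(c_{q,N}-c_{q,N_{j}})\kappa_{N_{j}}\ast g\big|\Big\|_{l^{2}}^{2}\ll_{\epsilon}q^{-2+2\epsilon}\|g\|_{l^{2}}^{2}\sum_{j\geq 1}\frac{1}{(\log N_{j})^{2}}.
\]
Finally, because $N_{j+1}\geq 2N_{j}$ forces $\log N_{j}\gg j$, the series $\sum_{j}(\log N_{j})^{-2}$ converges, so this contribution is also $\ll_{\epsilon}q^{-2+2\epsilon}\|g\|_{l^{2}}^{2}$. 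Adding the two summands and using $q^{-2+2\epsilon}\ll(q+1)^{-1+\epsilon}$ (with the $+1$ covering $q=0$) completes the proof.
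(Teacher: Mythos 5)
Your proof is correct, and it is in fact more complete than the paper's own argument. The paper's proof of Proposition \ref{Prop44} is a single line: since $E_{q,n}/\mathcal{D}_{n} \ll_{\epsilon} n^{-1}(q+1)^{-1+\epsilon}$, the claim ``directly comes from'' the Ces\`aro oscillation inequality \eqref{difference}. That reduction is exactly your strategy too, but as stated it silently treats $w^{\ast}_{n,q}$ as a \emph{fixed} multiple of $\kappa_{n}$, which is false: the coefficient $c_{q,n}=E_{q,n}n/\mathcal{D}_{n}$ varies with $n$, so $w^{\ast}_{N,q}-w^{\ast}_{N_{j},q}$ is not a constant times $\kappa_{N}-\kappa_{N_{j}}$ and \eqref{difference} does not apply verbatim. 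Your decomposition
\[
w^{\ast}_{N,q}-w^{\ast}_{N_{j},q}=c_{q,N}\,(\kappa_{N}-\kappa_{N_{j}})+(c_{q,N}-c_{q,N_{j}})\,\kappa_{N_{j}}
\]
is precisely the missing step: the first piece is handled by \eqref{difference} with the uniform bound $|c_{q,N}|\ll_{\epsilon}(q+1)^{-1+\epsilon}$, and the second by your slow-variation estimate $|c_{q,n}-L_{q}|\ll_{\epsilon}(q+1)^{-1+\epsilon}/\log n$ (obtained by cancelling the top-degree $(\log n)^{v-1}$ terms against the leading coefficient of $P_{v}$), which crucially is uniform over the whole block $[N_{j},N_{j+1}]$ no matter its length, together with $\ell^{2}$-boundedness of $\kappa_{N_{j}}\ast$ and the convergence of $\sum_{j}(\log N_{j})^{-2}$ forced by $N_{j+1}\geq 2N_{j}$. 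The same gap is present in the paper's Proposition \ref{34} (there $D_{q}N/\Lambda_{N}$ also depends on $N$), and your argument repairs both. Two trivial edge cases you should patch when writing this up: the $j=1$ block (where $N_{1}$ may equal $1$, so $\log N_{1}=0$) should be bounded directly by $|c_{q,N}-c_{q,N_{1}}|\ll_{\epsilon}(q+1)^{-1+\epsilon}$ and the maximal inequality, and your bounds should carry $(q+1)^{-1+\epsilon}$ rather than $q^{-1+\epsilon}$ throughout so that $q=0$ is covered from the start rather than at the end.
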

\begin{proof}
Since $$E_{q,n}/\mathcal{D}_{n} \ll_{\epsilon} n^{-1}(q+1)^{-1+\epsilon},$$  
the proof directly follows from \eqref{difference}.
\end{proof}
\begin{Rem}
Using a similar argument as in the proof of Proposition \ref{33}, it is easy to prove that 
\begin{equation}\label{wholebound}
|T^{\ast}_{n}(x)-\varphi^{\ast}_{n}(x)| \ll_{\epsilon} (\log n)^{-1+\epsilon} \;\; \textrm{for any} \;\; x \in [0,1].
\end{equation} 
\end{Rem}

\section{General cases}
\noindent In this section, we will demonstrate that the methods used in Section 3 can be applied to various arithmetic functions associated with classical $L$-functions. One may also notice that the arguments presented in Proposition \ref{C1} for the proof of Theorem \ref{hecketheorem} can be extended to other arithmetic functions.
\begin{Prop}\label{generalprop}
Let $$\mathcal{A}(s):=\sum_{m=1}^{\infty} \alpha(m)m^{-s}=\prod_{p \in \mathcal{P}}\left(1+\frac{\alpha(p)}{p^{s}}+\frac{\alpha(p^2)}{p^{2s}}+...\right)$$ be a function which satisfies the following properties:
\begin{enumerate} 
    \item There exists a natural number $k \in \mathbb{N}$ such that $$0 \leq \alpha(m) \leq d_{2}(m)^{k} \;\;\textrm{for all}\;\; m \in \mathbb{N}.$$
     \item $\mathcal{A}(s)$ has analytic continuation to the whole complex plane, where it is holomorphic except for a pole $s=1$ of order $\varkappa \geq 1.$  
    \item  For any non-principal Dirichlet character  $\chi$, $\mathcal{A}(\chi,s):=\sum_{m=1}^{\infty} \alpha(m)\chi(m)m^{-s}$ has analytic continuation to the whole complex plane where it is holomorphic.
    \item There exist natural numbers $b_{1},b_{2}>3$ such that for any $\sigma \in [\frac{1}{2},1],$ 
    \begin{equation}\begin{split}
    &\mathcal{A}(\sigma+it) \ll (1+|t|)^{b_{1}}, 
    \\&\mathcal{A}(\chi,\sigma+it):= \sum_{m=1}^{\infty} \alpha(m)\chi(m)m^{-s} \ll q^{b_{2}}(1+|t|)^{b_{1}}
    \end{split}\nonumber\end{equation}
    where $\chi$ is a non-principal Dirichlet character $(\textrm{mod} \;\;q)$.
\end{enumerate}
Let $n$ be sufficiently large. Then for any $1 \leq a < q$ such that $(a,q)=1$ or $a=0,q=1$,
\begin{enumerate}[label=(\alph*)]
    \item  $$ \sum_{m=1}^{n} \alpha(m) e(am/q) = nF_{q}(n) + O(q^{b_{2}+1}n^{4/5})$$ 
where $F_{q}(n):= \sum_{i=1}^{\varkappa} \mathcal{C}_{q,i} (\log n)^{\varkappa-i}$ for some constants $\mathcal{C}_{q,i} \ll_{\epsilon} q^{-1+\epsilon}.$
    \item For $|x-a/q|\leq 1/Q_{n}$ where $q \leq P_{n},$
     $$ \sum_{m=1}^{n} \alpha(m) e((x-a/q)m) = G_{q}(n)\sum_{m=1}^{n} e((x-a/q) m) + O_{\epsilon}\left(q^{-1+\epsilon}n(\log n)^{\varkappa-2}\right)$$
     where  $G_{q}(n):= \sum_{i=1}^{\varkappa} \mathcal{C}'_{q,i} (\log n)^{\varkappa-i}$ for some constants $\mathcal{C}'_{q,i} \ll_{\epsilon} q^{-1+\epsilon}.$
     \item  If, for every $1 \leq a \leq q \leq P_{n}$, $|x-a/q| > 1/Q_{n}$, then
$$\sum_{m=1}^{n} \alpha(m) e(mx)=O_{\epsilon}\left(P_{n}^{\epsilon-1} n (\log n)^{\varkappa-1} + n^{2}(\log n)^{\varkappa-1}/P_{n}Q_{n} + P_{n}^{b_{2}+1+\epsilon} n^{4/5}\right).$$
\end{enumerate}

\end{Prop}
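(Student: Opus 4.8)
The plan is to establish part (a) first; parts (b) and (c) then follow from it by exactly the elementary manipulations already used in the concrete cases, so the analytic content is entirely in (a), which is the common abstraction of Lemma~\ref{1lemma} and Lemma~\ref{lemma41}.

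For (a) I would start by expanding the additive character $e(am/q)$ into multiplicative characters. Splitting $q=q_0q_1$ according to the interaction of $m$ with the prime factors of $q$ and using Ramanujan/Gauss sums, the sum $\sum_{m\le n}\alpha(m)e(am/q)$ is reorganized into a combination, indexed by the factorizations $q=q_0q_1$ and by Dirichlet characters $\chi\pmod{q_1}$ with weights $\tfrac{\mu(q_1)}{\phi(q_1)q_0}$ times a Gauss-sum factor, of twisted partial sums $\sum_{m\le n/q_0,\,(m,q_1)=1}\alpha(q_0m)\chi(m)$. This is precisely the shape of the coefficients $D_q$ in Lemma~\ref{1lemma} and of $P_{v,q}$ in Lemma~\ref{lemma41}. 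Each inner sum is treated by truncated Perron's formula applied to $\sum_{(m,q_1)=1}\alpha(q_0m)\chi(m)m^{-s}$, which by the Euler product in the hypothesis factors through $\mathcal{A}(s)$ and $\mathcal{A}(\chi,s)$; property (1) gives $\alpha(q_0m)\ll_\epsilon(q_0m)^\epsilon$, so the series converges for $\Re s>1$ and has divisor-bounded coefficients. I would then shift the contour to $\Re s=\tfrac12$, the edge of the strip where the growth hypothesis (4) is available. For the principal character the pole of order $\varkappa$ at $s=1$ supplied by property (2) yields the main term: its residue is $x=n/q_0$ times a polynomial in $\log x$ of degree $\varkappa-1$, which after summing over $q_0,q_1$ and $\chi$ assembles into $nF_q(n)$ with $F_q(n)=\sum_{i=1}^{\varkappa}\mathcal{C}_{q,i}(\log n)^{\varkappa-i}$; since $\phi(q_1)q_0\gg q^{1-\epsilon}$ and $\alpha(q_0)\ll q_0^\epsilon$, the weight $\tfrac{1}{\phi(q_1)q_0}$ forces $\mathcal{C}_{q,i}\ll_\epsilon q^{-1+\epsilon}$. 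Property (3) is what makes the extraction clean: for non-principal $\chi$ the series $\mathcal{A}(\chi,s)$ is entire, so those characters contribute no residue and feed only the error term. Finally the shifted vertical integral and the Perron truncation are controlled by the convexity bounds in (4); optimizing the truncation height against the truncation error produces a power-saving bound, polynomial in $q$, of the form $O(q^{b_2+1}n^{4/5})$ recorded in the statement, the factor $q^{b_2}$ coming directly from the character-twisted bound in (4).

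Parts (b) and (c) follow the templates already carried out for the two concrete weights. For (b) I would feed the uniform estimate of (a) into Abel summation to attach the factor $e(\beta m)$, $\beta=x-a/q$, exactly as in Lemma~\ref{2lemma}; replacing $\int_0^n F_q(t)e(\beta t)\,dt$ by $G_q(n)\sum_{m\le n}e(\beta m)$ through the Euler--Maclaurin formula and using that each $(\log t)^{\varkappa-i}$ varies slowly (so that replacing $\log t$ by $\log n$ costs one power of $\log$) gives the error $O_\epsilon(q^{-1+\epsilon}n(\log n)^{\varkappa-2})$, just as in the first part of Proposition~\ref{Prop42}. For (c) I would apply Dirichlet's approximation theorem to obtain $a/q$ with $|x-a/q|\le 1/(qQ_n)$, the minor-arc hypothesis forcing $q>P_n$; the telescoping estimate $|e(mx)-e(am/q)|\le 2\pi m/(P_nQ_n)$ reduces $\sum_m\alpha(m)e(mx)$ to $\sum_m\alpha(m)e(am/q)$ plus $\tfrac{2\pi}{P_nQ_n}\sum_{m\le n}m\alpha(m)\ll n^2(\log n)^{\varkappa-1}/(P_nQ_n)$, the last bound using property (1) together with the order-$\varkappa$ pole. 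Applying (a) to the remaining sum and using $F_q(n)\ll P_n^{\epsilon-1}(\log n)^{\varkappa-1}$ for $q>P_n$ produces the three stated terms, exactly in parallel with Lemma~\ref{3lemma}.

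The main obstacle is part (a), and within it the contour shift carried out \emph{uniformly in $q$}: one must simultaneously extract the degree-$(\varkappa-1)$ logarithmic main term with coefficients of size $q^{-1+\epsilon}$ and keep the error polynomial in $q$, which demands careful bookkeeping of the Gauss-sum weights and of the $q$-dependence in the convexity bound (4). A secondary point needing care is the admissible range of $q$ in (c): the error $q^{b_2+1}n^{4/5}$ coming from (a) is only negligible when the Dirichlet denominator is not too large, so one must check that the denominator produced in the minor-arc step stays in the regime where (a) is efficient, exactly as is implicitly used in Lemma~\ref{3lemma}.
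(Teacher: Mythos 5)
Your proposal follows essentially the same route as the paper's proof: your Gauss-sum/character decomposition over $q=q_{0}q_{1}$ is exactly Lemma \ref{Lemma61}, your contour shift to $\Re(s)=\frac{1}{2}$ extracting the order-$\varkappa$ residue at $s=1$ (with hypothesis (3) removing the pole for non-principal $\chi$ and hypothesis (4) bounding the shifted integral) is exactly Lemmas \ref{Lemma62} and \ref{Lemma63}, and your treatments of (b) and (c) coincide with the paper's reductions to the arguments of Proposition \ref{Prop42} and Lemma \ref{3lemma}. The only divergence is technical — you invoke sharply truncated Perron where the paper works with dyadic blocks, a smooth cutoff and Mellin inversion — and with either device the power saving in (a) in fact degrades as $b_{1}$ grows (balancing the truncation, or smoothing, error against the $(1+|t|)^{b_{1}}$ growth on the half-line gives $n^{1-c(b_{1})}$ rather than $n^{4/5}$), a looseness shared by the paper's own write-up and harmless for the application, since any fixed power saving suffices in (b) and (c) where $q\le P_{n}$ is only a power of $\log n$.
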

\begin{Rem} The Euler product of $\mathcal{A}(s)$ ensures that $\alpha(m)$ is multiplicative. In general,  classical $L$-functions satisfy conditions $\textit{(2),(3)},$ and $\textit{(4)}$ (see \cite[Chapter 5]{IK1}). For automorphic $L$-functions, condition $\textit{(1)}$ is related to the Ramanujan conjecture (see \cite[page 99]{IK1}). Note that the $L$-functions for $\lambda(m)^{2}$, and $d_{v}(m)$ satisfy the conditions in Proposition \ref{generalprop}. 
\end{Rem}
\noindent 
Using similar arguments as in Section 3, we prove the above proposition and the following lemmas. 
\begin{Lemma}\label{Lemma61} Assume the conditions in Proposition \ref{generalprop}.
Then 
\begin{equation}\begin{split}\sum_{m=1}^{n} \alpha(m) e(am/q)&=\sum_{q=q_{0}q_{1}} \frac{\mu(q_{1})}{\phi(q_{1})} \sum_{m_{1} \leq n/q_{0} \atop (m_{1},q_{1})=1} \alpha(q_{0}m_{1})
\\& \;\; + O\left(\sum_{q=q_{0}q_{1}}q_{1}^{\frac{1}{2}+\epsilon} \left| \sum_{m_{1} \leq n/q_{0}} \alpha(q_{0}m_{1})\chi(m_{1})\right|\right). 
\end{split}\nonumber\end{equation}
\end{Lemma}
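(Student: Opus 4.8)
The plan is to \emph{complete} the additive character $e(am/q)$ by expanding it into Dirichlet characters modulo the relevant divisor of $q$; this is the standard device that converts an exponential sum twisted by $e(am/q)$ into a principal (main) term plus a collection of character sums.

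First I would split the summation over $m \leq n$ according to the value of $d:=(m,q)$. Writing $q_{0}=d$, $q_{1}=q/d$ and $m=q_{0}m_{1}$, the elementary fact that $(m/d,q/d)=1$ forces $(m_{1},q_{1})=1$; since $(a,q)=1$ we also have $(am_{1},q_{1})=1$ and $e(am/q)=e(am_{1}/q_{1})$. Thus
$$\sum_{m \leq n}\alpha(m)e(am/q)=\sum_{q_{0}q_{1}=q}\ \sum_{\substack{m_{1}\leq n/q_{0}\\(m_{1},q_{1})=1}}\alpha(q_{0}m_{1})\,e(am_{1}/q_{1}).$$

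Next, on each inner sum I would invoke the orthogonality of Dirichlet characters in the form $e(am_{1}/q_{1})=\phi(q_{1})^{-1}\sum_{\chi \bmod q_{1}}\overline{\chi}(a)\overline{\chi}(m_{1})\tau(\chi)$, which is legitimate because $(am_{1},q_{1})=1$; here $\tau(\chi)=\sum_{b\bmod q_{1}}\chi(b)e(b/q_{1})$ is the Gauss sum. Isolating the principal character $\chi_{0}$ gives $\overline{\chi}_{0}(a)=1$, $\overline{\chi}_{0}(m_{1})=\mathbf{1}_{(m_{1},q_{1})=1}$ and $\tau(\chi_{0})=c_{q_{1}}(1)=\mu(q_{1})$, so the $\chi_{0}$-contribution reproduces exactly the asserted main term $\sum_{q_{0}q_{1}=q}\tfrac{\mu(q_{1})}{\phi(q_{1})}\sum_{m_{1}\leq n/q_{0},\,(m_{1},q_{1})=1}\alpha(q_{0}m_{1})$.

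For the non-principal characters I would use $|\overline{\chi}(a)|\leq 1$ together with the square-root bound $|\tau(\chi)|\leq q_{1}^{1/2}$, valid for every character modulo $q_{1}$ (including imprimitive ones). Since $\overline{\chi}(m_{1})$ already vanishes unless $(m_{1},q_{1})=1$, the coprimality constraint may be dropped inside the inner sum, leaving $\sum_{m_{1}\leq n/q_{0}}\alpha(q_{0}m_{1})\chi(m_{1})$. Bounding the average $\phi(q_{1})^{-1}\sum_{\chi\neq\chi_{0}}$ by the worst single non-principal character then produces the stated error term $\sum_{q_{0}q_{1}=q}q_{1}^{1/2+\epsilon}\,\big|\sum_{m_{1}\leq n/q_{0}}\alpha(q_{0}m_{1})\chi(m_{1})\big|$, the extra $q_{1}^{\epsilon}$ being harmless slack. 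The only genuinely delicate points are the bookkeeping for the Gauss sums of imprimitive characters and the verification that the $d=(m,q)$ splitting produces precisely the coprimality condition $(m_{1},q_{1})=1$; apart from these the argument is routine character orthogonality. It is worth noting that neither the multiplicativity of $\alpha$ nor conditions $(2)$--$(4)$ of Proposition \ref{generalprop} is used here, so that this lemma is a purely additive identity, and the analytic content is postponed to the estimation of the twisted sums $\sum_{m_{1}\leq n/q_{0}}\alpha(q_{0}m_{1})\chi(m_{1})$ carried out in the subsequent lemmas.
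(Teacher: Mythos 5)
Your proposal is correct and follows essentially the same route as the paper: decompose the sum according to $(m,q)=q_{0}$, expand $e(am_{1}/q_{1})$ via orthogonality of Dirichlet characters modulo $q_{1}$, identify the principal-character Gauss sum with $\mu(q_{1})$ to obtain the main term, and bound the non-principal Gauss sums by $q_{1}^{1/2}$ to obtain the error term. The side remarks (validity of $|\tau(\chi)|\leq q_{1}^{1/2}$ for imprimitive characters, and the fact that conditions (1)--(4) of Proposition \ref{generalprop} are not needed for this purely combinatorial identity) are accurate and consistent with the paper's argument.
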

\begin{proof} The proof follows from the proof of \cite[Proposition 4.2]{MRT1}.
By considering the common divisor $q_{0}$ of $q$ and $m$, we have
$$\sum_{m=1}^{n} \alpha(m) e(am/q)= \sum_{q=q_{0}q_{1}} \sum_{m_{1} \leq n/q_{0}, \atop (m_{1},q_{1})=1} \alpha(q_{0}m_{1}) e(am_{1}/q_{1}).$$
By the orthogonality of $\chi,$ it is easy to see that, for any $m_{1}, q_{1} \in \mathbb{N},$
\begin{equation}\label{rationalidentity}
e(am_{1}/q_{1}) 1_{(m_{1},q_{1})=1} = \frac{1}{\phi(q_{1})} \sum_{\chi \;\textrm{mod} \; q_{1}} \chi(a)\chi(m_{1}) \tau(\bar{\chi})
\end{equation}
where $$\tau(\bar{\chi}):=\sum_{m=1}^{q_{1}} \bar{\chi}(m)e(m/q_{1}).$$
Note that 
$\tau(\bar{\chi})=\mu(q_{1})$ when $\chi$ is a principal character $\textrm{mod} \; q_{1},$ $|\tau(\bar{\chi})|\leq \sqrt{q_{1}}$ otherwise. 
Using \eqref{rationalidentity}, we have  
\begin{equation}\begin{split}\sum_{m=1}^{n} \alpha(m) e(am/q)&= \sum_{q=q_{0}q_{1}} \frac{1}{\phi(q_{1})}  \sum_{\chi \; \textrm{mod} \; q_{1}} \tau(\bar{\chi}) \chi(a) \sum_{m_{1} \leq n/q_{0}} \alpha(q_{0}m_{1})\chi(m_{1})
\\&  =\sum_{q=q_{0}q_{1}} \frac{\mu(q_{1})}{\phi(q_{1})} \sum_{m_{1} \leq n/q_{0} \atop (m_{1},q_{1})=1} \alpha(q_{0}m_{1})
\\& \;\; + O\left(\sum_{q=q_{0}q_{1}}q_{1}^{1/2+\epsilon} \left| \sum_{m_{1} \leq n/q_{0}} \alpha(q_{0}m_{1})\chi(m_{1})\right|\right).
\end{split}\end{equation}
\end{proof}
\noindent Next we estimate the main term in Lemma \ref{Lemma61}.
\begin{Lemma}\label{Lemma62} Assume the conditions in Proposition \ref{generalprop}, and let $n/2q_{0}$ be sufficiently large. Then
$$\sum_{n/2q_{0} \leq m_{1} \leq n/q_{0} \atop (m_{1},q_{1})=1} \alpha(q_{0}m_{1}) = w'_{q_{0},q_{1}}\textrm{Res}_{s=1} \frac{\mathcal{A}(s) (n/2q_{0})^{s}}{s} + O\left((n/2q_{0})^{4/5}\right)$$
where $$w'_{q_{0},q_{1}}= \prod_{p|q} 
\left(1+\frac{\alpha(p)}{p}+\frac{\alpha(p^{2})}{p^{2}}+...\right)^{-1}\prod_{p|q_{0} \atop p^{l}|q_{0}, p^{l+1} \nmid q_{0}}\left( \alpha(p^{l})+\frac{\alpha(p^{l+1})}{p}+\frac{\alpha(p^{l+2})}{p^2}+...\right). $$ 

\end{Lemma}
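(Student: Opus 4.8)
The plan is to read the block sum off the analytic behaviour of an associated Dirichlet series by a contour (Perron) argument: the pole of $\mathcal{A}(s)$ at $s=1$ supplies the main term, while the coefficient bound $(1)$ and the growth bound $(4)$ control the remaining integral and produce the power saving. Throughout write $y:=n/2q_0$, so the block is $[y,2y]$. First I would introduce
$$F_{q_0,q_1}(s):=\sum_{\substack{m_1\geq1\\(m_1,q_1)=1}}\frac{\alpha(q_0m_1)}{m_1^{s}},$$
which by $(1)$ (giving $\alpha(q_0m_1)\ll_{\epsilon}(q_0m_1)^{\epsilon}$) converges absolutely for $\operatorname{Re}(s)>1$. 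For fixed $q_0,q_1$ the coefficient $m_1\mapsto\alpha(q_0m_1)1_{(m_1,q_1)=1}$ is multiplicative, so $F_{q_0,q_1}$ has an Euler product whose factor at a prime $p$ is $\alpha(p^{l_p})$ if $p\mid q_1$ and $\sum_{j\geq0}\alpha(p^{\,l_p+j})p^{-js}$ if $p\nmid q_1$, where $l_p$ is fixed by $p^{l_p}\|q_0$ (so $l_p=0$ when $p\nmid q_0$).

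Next I would compare this with $\mathcal{A}(s)=\prod_p A_p(s)$, $A_p(s)=\sum_{j\geq0}\alpha(p^{j})p^{-js}$. The local factors already agree at every $p\nmid q_0q_1$, so
$$F_{q_0,q_1}(s)=\mathcal{A}(s)\,H_{q_0,q_1}(s),\qquad H_{q_0,q_1}(s)=\prod_{p\mid q_1}\frac{\alpha(p^{l_p})}{A_p(s)}\prod_{\substack{p\mid q_0\\p\nmid q_1}}\frac{1}{A_p(s)}\sum_{j\geq0}\alpha(p^{\,l_p+j})p^{-js},$$
a finite product of one-prime series. Since $\alpha\geq0$ forces $A_p(1)\geq1$, the factor $H_{q_0,q_1}$ is holomorphic and nonzero at $s=1$ with $H_{q_0,q_1}(1)=w'_{q_0,q_1}$, and it has controlled (explicitly $q$-dependent) size on the vertical line used below. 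Consequently $F_{q_0,q_1}$ inherits from $(2)$ an analytic continuation to $\mathbb{C}$ whose only singularity is a pole of order $\varkappa$ at $s=1$, and from $(4)$ a pointwise bound $\ll_q(1+|t|)^{b_1}$ on $\operatorname{Re}(s)\in[\tfrac12,1]$.

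Then I would apply a truncated Perron formula to $\sum_{m_1\leq Y,\,(m_1,q_1)=1}\alpha(q_0m_1)$ at $Y=2y$ and $Y=y$ and subtract, shifting the contour to $\operatorname{Re}(s)=\tfrac12$ and collecting the residue at $s=1$. The residue of $\frac{F_{q_0,q_1}(s)(\,(2y)^{s}-y^{s})}{s}$ is the main term; because $H_{q_0,q_1}$ is holomorphic at $s=1$ with value $w'_{q_0,q_1}$, its leading part is $w'_{q_0,q_1}\operatorname{Res}_{s=1}\frac{\mathcal{A}(s)y^{s}}{s}$ (an exact identity when the pole is simple). To bound the shifted integral I would \emph{not} rely on the pointwise bound $(4)$ alone, which is too weak to reach a power saving: instead I would approximate $\mathcal{A}(\tfrac12+it)$ (hence $F_{q_0,q_1}(\tfrac12+it)$) by a Dirichlet polynomial $\sum_{m\leq T}\alpha(m)m^{-1/2-it}$, using $(2)$ and $(4)$ to control the completion error, and then invoke the mean value theorem for Dirichlet polynomials (Montgomery--Vaughan) together with $\sum_{m\leq x}\alpha(m)^{2}\ll_{\epsilon}x(\log x)^{O_k(1)}$ from $(1)$ to get $\int_{-T}^{T}|\mathcal{A}(\tfrac12+it)|^{2}\,dt\ll T^{1+\epsilon}$. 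Cauchy--Schwarz in the vertical integral then gives a contribution $\ll_q y^{1/2}T^{\epsilon}$, which balanced against the truncation error $\ll y^{1+\epsilon}/T$ (taking $T=y^{1/2}$) yields a total error $\ll_q y^{1/2+\epsilon}$, comfortably inside the claimed $O(y^{4/5})$.

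The main obstacle is exactly this last step: condition $(4)$ furnishes only a pointwise polynomial bound of degree $b_1>3$, which by itself cannot beat an exponent of roughly $1-\frac{1}{2b_1}>\frac45$, so the saving must come from the second moment, and the crux is engineering the Dirichlet-polynomial approximation of $\mathcal{A}$ on $\operatorname{Re}(s)=\tfrac12$ (using only analytic continuation and polynomial growth, with no functional equation) so that Montgomery--Vaughan applies to the polynomial part. Two pieces of bookkeeping accompany it: tracking the order-$\varkappa$ pole so that the main term is honestly a degree-$(\varkappa-1)$ polynomial in $\log y$ (with the derivative-of-$H_{q_0,q_1}$ terms placed in its lower-order coefficients rather than in the error), and keeping the $q$-dependence of $H_{q_0,q_1}$ explicit so that, after weighting by $\mu(q_1)/\phi(q_1)$ and summing over $q=q_0q_1$ in Lemma~\ref{Lemma61}, the $q^{-1+\epsilon}$ savings required by Proposition~\ref{generalprop} survive.
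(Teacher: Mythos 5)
Your setup coincides with the paper's: both arguments factor the congruence-restricted series as $F_{q_0,q_1}(s)=\mathcal{A}(s)H_{q_0,q_1}(s)$ with $H_{q_0,q_1}$ a finite product of modified Euler factors, identify $H_{q_0,q_1}(1)=w'_{q_0,q_1}$, and extract the main term as a residue at $s=1$ after shifting a contour to $\Re(s)=\tfrac12$. The divergence is in how the shifted integral is controlled, and this is where your route breaks down. The paper never performs a sharp truncation: writing $y=n/2q_0$ as you do, it inserts a $C^\infty$ cutoff $\eta$ equal to $1$ on $[y,2y]$ and supported in $[y-Y,2y+Y]$, applies Mellin inversion, and exploits that the Mellin transform obeys $\tilde\eta(s)\ll_l (y/Y)^{l-1}|s|^{-l}$ for \emph{every} $l$. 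Taking $l=b_1+2$ makes the integrand on $\Re(s)=\tfrac12$ absolutely integrable using only the pointwise polynomial bound in condition (4); there are no horizontal segments and no mean-value input of any kind. The only price is the smoothing error $O_\epsilon(Yn^\epsilon q_0^\epsilon)$, and the choice of $Y$ balances this against the vertical integral to produce the power saving.

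Your unsmoothed Perron-plus-second-moment substitute has two genuine gaps. First, you never treat the horizontal segments at height $\pm T$: with only condition (4) available they contribute $\ll_q T^{b_1-1}y^{1+\epsilon}$, which for your choice $T=y^{1/2}$ and $b_1\geq 4$ is of size at least $y^{5/2}$, far larger than the main term; no choice of $T$ repairs this without pointwise information you do not have. Second, and more fundamentally, the estimate $\int_{-T}^{T}|\mathcal{A}(\tfrac12+it)|^2\,dt\ll T^{1+\epsilon}$ is not a consequence of hypotheses (1)--(4). Your plan to prove it by approximating $\mathcal{A}(\tfrac12+it)$ by the Dirichlet polynomial $\sum_{m\leq T}\alpha(m)m^{-1/2-it}$ ``using (2) and (4) to control the completion error'' has no workable mechanism: conditions (2) and (3) supply analytic continuation but no growth control to the left of $\Re(s)=\tfrac12$, and no functional equation is assumed, so both standard routes to such an approximation (a contour shift past the critical line, or an approximate functional equation) are blocked. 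You correctly flag this as the crux, but it is not bookkeeping; at this level of generality the bound is simply unavailable, so Montgomery--Vaughan never gets to act. Incidentally, your diagnosis that pointwise bounds alone cannot reach the exponent $4/5$ is fair --- the paper's own accounting is loose here, since its smoothing error with $Y=y^{1-1/(4b_1)}$ already exceeds $y^{4/5}$ --- but the correct response is that any power saving $y^{1-\delta}$ suffices for Proposition \ref{generalprop} and the ergodic application downstream, not to reach for second-moment machinery that the stated axioms cannot support.
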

\begin{proof} The proof follows from 
the proof of \cite[Lemma 3.4]{Kim2022s}.
Let $0<Y<\frac{n}{10q_{0}}.$
Let $\eta$ be a fixed smooth function with a compact support $[n/2q_{0} -Y,n/q_{0}+Y]$ such that $\eta(\frac{m}{n/2q_{0}})=1$ for $ n/2q_{0} \leq m\leq n/q_{0}$, $\eta(x) \in [0,1]$. By the condition $\textit{(1)}$, we have

$$ \sum_{n/2q_{0} \leq m_{1} \leq n/q_{0} \atop (m_{1},q_{1})=1}\alpha(q_{0}m_{1})= \sum_{m_{1}=1 \atop (m_{1},q_{1})=1}^{\infty}\alpha(q_{0}m_{1})\eta(\frac{m_{1}}{n/2q_{0}}) + O_{\epsilon}\left( Yn^{\epsilon}q_{0}^{\epsilon}\right).$$
Let $$\tilde{\eta}(s):= \int_{0}^{\infty} \eta(x)x^{s-1}dx.$$
By the Mellin inversion formula, we get
$$\sum_{n/2q_{0} \leq m_{1} \leq n/q_{0} \atop (m_{1},q_{1})=1} \alpha(q_{0}m_{1}) =\frac{1}{2\pi i} \int_{2-i\infty}^{2+i\infty} E(s) \tilde{\eta}(s)(n/2q_{0})^{s} ds+O_{\epsilon}(Yn^{\epsilon}q_{0}^{\epsilon})$$
where
$$E(s):= \sum_{m=1, \atop (m,q_{1})=1}^{\infty} \alpha(q_{0}m)m^{-s}= \mathcal{A}(s) \prod_{p|q} 
\left(1+\frac{\alpha(p)}{p^{s}}+...\right)^{-1}\prod_{p|q_{0} \atop p^{l}|q_{0}, p^{l+1} \nmid q_{0}}\left( \alpha(p^{l})+\frac{\alpha(p^{l+1})}{p^{s}}+...\right)  $$
Let $\nu_{q_{0},q_{1}}= \textrm{Res}_{s=1}E(s) \times \tilde{\eta}(1)$. It is easy to check that $\tilde{\eta}(s) \ll_{l} (\frac{n}{2q_{0}Y})^{l-1} |s|^{-l}$ for any $l \in \mathbb{N}.$
By shifting the contour to the line $\Re(s)=\frac{1}{2},$ we have
$$\sum_{n/2q_{0} \leq m_{1} \leq n/q_{0}, \atop(m_{1},q_{1})=1} \alpha(q_{0}m_{1})=\nu_{q_{0},q_{1}} n/2q_{0}+ \int_{\frac{1}{2}-i\infty}^{\frac{1}{2}+i\infty} E(s)\tilde{\eta}(s)(n/2q_{0})^{s}ds + O_{\epsilon}(Yn^{\epsilon}q^{\epsilon}).$$
Note that for $\sigma \in [\frac{1}{2},1],$
\begin{equation}\begin{split}\prod_{p|q} &
\left(1+\frac{\alpha(p)}{p^{\sigma+it}}+...\right)^{-1}\prod_{p|q_{0} \atop p^{l}|q_{0}, p^{l+1} \nmid q_{0}}\left( \alpha(p^{l})+\frac{\alpha(p^{l+1})}{p^{\sigma+it}}+...\right)
\\&\ll \prod_{p|q} 
\left(1-\frac{2^{k+1}}{p^{\sigma}}-...\right)^{-1}\prod_{p|q_{0} \atop p^{l}|q_{0}, p^{l+1} \nmid q_{0}}\left( (l+1)^{k}+\frac{(l+2)^{k}}{p^{\sigma}}+...\right)
\\&\ll (\log_{2}(q+1))^{k+1} \prod_{p|q}\left( 1+\frac{2^{k}}{p^{\sigma}}+\frac{3^{k}}{p^{2\sigma}} +...\right)
\\&\ll  (\log_{2}(q+1))^{k+1} \prod_{p|q \atop p>2^{2k}}\left(1-\frac{1}{p^{\sigma}/2^{k}}\right)^{-1}
\\&\ll (\log_{2} (q+1))^{k+2}
\end{split}\end{equation}
\noindent (using the fact that $p^{l}|q \Rightarrow l \leq \log_{2} (q+1)$).
By the condition $\textit{(4)}$, $E(s) \ll_{\epsilon} (\log (q+1))^{k+2} |s|^{b_{1}}$  on $\Re(s)=\frac{1}{2}.$ Therefore, by  $\tilde{\eta}(s) \ll (\frac{n}{2q_{0}Y})^{b_{1}+1} |s|^{-b_{1}-2},$
\begin{equation}\begin{split}\int_{\frac{1}{2}-i\infty}^{\frac{1}{2}+i\infty} E(s)\tilde{\eta}(s)(n/2q_{0})^{s}ds
&\ll_{\epsilon} \Big|\int_{\frac{1}{2}-i\frac{(n/2q_{0})^{1+\epsilon}}{Y}}^{\frac{1}{2}+i\frac{(n/2q_{0})^{1+\epsilon}}{Y}} E(s)\tilde{\eta}(s)(n/2q_{0})^{s}ds\Big| \\&\;\;\;+ (\log (q+1))^{k+2}(n/2q_{0}Y)^{b_{1}}(n/2q_{0})^{1/2} \\ &\ll_{\epsilon}(n/2q_{0})^{\epsilon} (\log (q+1))^{k+2} (n/(2q_{0}Y))^{b_{1}}(n/2q_{0})^{1/2}.\end{split} \nonumber \end{equation} 
When $Y=(n/2q_{0})^{1-\frac{1}{4b_{1}}},$ the crude bound of the above inequality is  $O\left((n/2q_{0})^{4/5}\right).$
By the conditions on $\eta$, we have  $\tilde{\eta}(1)=1+O\left((\frac{n}{2q_{0}})^{-\frac{1}{4}}\right).$ Therefore, 
$$v_{q_{0},q_{1}}n/2q_{0} = w'_{q_{0},q_{1}} \textrm{Res}_{s=1} \frac{\mathcal{A}(s)(n/2q_{0})^{s}}{s}+O \left( (n/2q_{0})^{4/5}\right).$$
\end{proof}
\begin{Rem} The bound for the error term in Lemma \ref{Lemma62} can be improved, but it is sufficient for our purposes.
\end{Rem}
\noindent Using a similar argument as in the proof of Proposition \ref{33}, it is easy to prove the $\alpha(m)$ version of Proposition \ref{33}. Since the proof of the following lemma is very similar to the proof of Lemma \ref{Lemma62}, we will skip it.  The only difference is that we apply the condition $\textit{(3)}$ instead of $\textit{(2)}.$ 
\begin{Lemma}\label{Lemma63}  Assume the conditions in Proposition \ref{generalprop}, and let $n/2q_{0}$ be sufficiently large. Then, for any non-principal Dirichlet character $\chi$ mod $q_{1},$
$$\sum_{n/2q_{0} \leq m_{1} \leq n/q_{0}} \alpha(q_{0}m_{1})\chi(m_{1}) \ll \left((n/2q_{0})^{4/5}q_{1}^{b_{2}}\right).$$
\end{Lemma}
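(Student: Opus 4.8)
The plan is to reproduce the proof of Lemma \ref{Lemma62} almost verbatim, with $\mathcal{A}(s)$ replaced by the twisted series $\mathcal{A}(\chi,s)$ and condition \textit{(3)} invoked in place of condition \textit{(2)}. First I would introduce the same smooth weight: fix a smooth $\eta$ with support $[n/2q_0 - Y, n/q_0 + Y]$, equal to $1$ on the summation range and taking values in $[0,1]$. Because $0 \le \alpha(q_0 m_1) \le d_2(q_0 m_1)^k \ll_\epsilon (n q_0)^\epsilon$ by condition \textit{(1)} and $|\chi(m_1)| \le 1$, replacing the sharp cutoff by $\eta$ costs only $O_\epsilon(Y n^\epsilon q_0^\epsilon)$, exactly as in Lemma \ref{Lemma62}.

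Next I would apply Mellin inversion against $\tilde\eta(s) = \int_0^\infty \eta(x)x^{s-1}\,dx$, obtaining
$$\sum_{n/2q_0 \le m_1 \le n/q_0} \alpha(q_0 m_1)\chi(m_1) = \frac{1}{2\pi i}\int_{2-i\infty}^{2+i\infty} E_\chi(s)\,\tilde\eta(s)\,(n/2q_0)^s\,ds + O_\epsilon(Y n^\epsilon q_0^\epsilon),$$
where $E_\chi(s) := \sum_{m \ge 1}\alpha(q_0 m)\chi(m)m^{-s}$ (the restriction $(m,q_1)=1$ is automatic since $\chi$ is a character mod $q_1$). Splitting $m$ into its part supported on primes dividing $q_0$ and its part coprime to $q_0$, and using that $\alpha$ is multiplicative while $\chi$ is completely multiplicative, I would factor
$$E_\chi(s) = \mathcal{A}(\chi,s)\prod_{p \mid q}\left(1 + \frac{\alpha(p)\chi(p)}{p^s} + \cdots\right)^{-1}\prod_{p^l \| q_0}\left(\alpha(p^l)\chi(p^l) + \frac{\alpha(p^{l+1})\chi(p^{l+1})}{p^s} + \cdots\right),$$
so that $E_\chi(s)$ is $\mathcal{A}(\chi,s)$ times a finite Euler product supported on the primes dividing $q_0$.

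The decisive difference from Lemma \ref{Lemma62} enters here: by condition \textit{(3)} the function $\mathcal{A}(\chi,s)$ is entire, so $E_\chi(s)$ has no pole at $s=1$. Consequently, when I shift the contour from $\Re(s)=2$ to $\Re(s)=\tfrac12$ no residue is collected and there is simply no main term — this is precisely why the whole sum is an error term. On the line $\Re(s)=\tfrac12$ I would bound the finite local factors by the very computation already carried out in Lemma \ref{Lemma62} (using $\alpha(p^j) \le (j+1)^k$ from condition \textit{(1)}, with $|\chi(p)| \le 1$ only improving matters), getting $\ll (\log_2(q+1))^{k+2}$, and I would bound $\mathcal{A}(\chi,\tfrac12 + it) \ll q_1^{b_2}(1+|t|)^{b_1}$ by condition \textit{(4)}. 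Together with the decay $\tilde\eta(s) \ll (n/(2q_0 Y))^{b_1+1}|s|^{-b_1-2}$, truncating the vertical integral and choosing $Y = (n/2q_0)^{1 - 1/(4b_1)}$ exactly as before yields the crude bound $O((n/2q_0)^{4/5}q_1^{b_2})$.

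The one point requiring a little care — and the main technical obstacle — is checking that the finite Euler product at the primes dividing $q_0$ contributes no poles of its own in the strip $\tfrac12 \le \Re(s) \le 2$ and is genuinely controlled on $\Re(s) = \tfrac12$; but this is handled exactly as the analogous factor in Lemma \ref{Lemma62}, where one bounds the reciprocal factors away from their finitely many small primes and absorbs the rest into the implied constant. Everything else is a transcription of the earlier argument with $\mathcal{A}(s)$ replaced throughout by $\mathcal{A}(\chi,s)$ and with the residue step omitted.
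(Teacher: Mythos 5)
Your proposal takes exactly the approach the paper intends: the paper skips the proof of Lemma \ref{Lemma63}, saying only that it repeats the proof of Lemma \ref{Lemma62} with condition \textit{(3)} invoked in place of condition \textit{(2)}, and that is precisely what you carry out (smooth cutoff, Mellin inversion, no residue since $\mathcal{A}(\chi,s)$ is entire, contour shift to $\Re(s)=\tfrac{1}{2}$ with condition \textit{(4)} supplying the factor $q_{1}^{b_{2}}$, and the same choice $Y=(n/2q_{0})^{1-1/(4b_{1})}$). The only blemish is a slip in your Euler factorization: the local factor at $p^{l}\| q_{0}$ should be $\sum_{j\ge 0}\alpha(p^{l+j})\chi(p)^{j}p^{-js}$ (the character acts on $m$, not on $q_{0}m$), since as written your product vanishes identically whenever $\gcd(q_{0},q_{1})>1$; this does not affect the argument, as the bound on the finite local factors uses only $|\chi|\le 1$ and $\alpha(p^{j})\le (j+1)^{k}$.
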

\subsection{Proof of Proposition \ref{generalprop}} Let us first prove the conclusion $(a)$.
By using dyadic summation with Lemma \ref{Lemma62}, for sufficiently large $n/q,$ we have
$$
\sum_{q=q_{0}q_{1}} \frac{\mu(q_{1})}{\phi(q_{1})} \sum_{m_{1} \leq n/q_{0} \atop (m_{1},q_{1})=1} \alpha(q_{0}m_{1})= \sum_{q=q_{0}q_{1}} \frac{\mu(q_{1})w'_{q_{0},q_{1}}}{\phi(q_{1})} \textrm{Res}_{s=1} \frac{\mathcal{A}(s)(n/q_{0})^{s}}{s}+O_{\epsilon} \left( n^{4/5}q^{-1+\epsilon}\right).$$
Since $\mathcal{A}(s)$ has a pole of order $\varkappa$ at $s=1$, the residue 
 $$\textrm{Res}_{s=1} \frac{\mathcal{A}(s)(n/2q_{0})^{s}}{s}$$ is a degree $\varkappa-1$ polynomial in $\log (n/q_{0}).$
 Therefore, 
$$ \sum_{q=q_{0}q_{1}} \frac{\mu(q_{1})}{\phi(q_{1})} \sum_{m_{1} \leq n/q_{0} \atop (m_{1},q_{1})=1} \alpha(q_{0}m_{1}) = n\sum_{i=1}^{\varkappa} \mathcal{C}_{q,i} (\log n)^{\varkappa-i} + O_{\epsilon} \left(n^{\frac{4}{5}}q^{-1+\epsilon}\right)$$ for some constants $\mathcal{C}_{q,i} \ll_{\epsilon} q^{-1+\epsilon}.$
Using Lemma \ref{Lemma63}, we have the crude bound
$$\sum_{q=q_{0}q_{1}}q_{1}^{\frac{1}{2}+\epsilon} \left| \sum_{m_{1} \leq n/q_{0}} \alpha(q_{0}m_{1})\chi(m_{1})\right| \ll n^{\frac{4}{5}}q^{b_{2}+1}. 
$$ 
Now, we are ready to prove $(b).$
Let $\beta= x-a/q$ such that $|\beta|\leq \frac{1}{Q_{n}}.$
Using \cite[Lemma 2.1]{MRT1}, we see that 
$$
\sum_{1 \leq m \leq n} \alpha(m) e\left(am/q\right) e(\beta m) 
=\int_0^{n} \frac{d}{dx}\left(xF_{q}(x)\right) e(\beta x) d x+O_\epsilon\left((|\beta|+1/n) n^{9/5} q^{b_{2}+1}\right).
$$
Since $$G_{q}(x):=\frac{d}{dx}\left(xF_{q}(x)\right)= \sum_{i=1}^{\varkappa} \mathcal{C}'_{q,i}(\log x)^{\varkappa-i}$$ for some constants $\mathcal{C}'_{q,i} \ll_{\epsilon} q^{-1+\epsilon},$ 
using a similar argument as in the proof of Proposition \ref{Prop42}, we have  
\begin{equation}\begin{split}\int_0^{n} \frac{d}{dx}\left(xF_{q}(x)\right) e(\beta x) d x &= G_{q}(n)\sum_{m=1}^{n} e(\beta m)  \\&\;\;\;+O_\epsilon\left(q^{-1+\epsilon}(\log n)^{\varkappa-1} / 2+q^{-1+\epsilon} n(\log n)^{\varkappa-1}\left(\frac{1}{\log n}+1 / Q_n\right)\right).\end{split}\end{equation}
Therefore, 
\begin{equation}
\sum_{m=1}^{n} \alpha(m) e((x-a/q)m) = G_{q}(n) \sum_{m=1}^{n} e\left(\beta m\right) +O \left(n^{4/5}q^{b_{2}+1}\left(1+nQ_{n}^{-1}+n^{1/5}q^{-b_{2}-2+\epsilon}(\log n)^{\varkappa-2}\right)\right). 
\end{equation}
Since $Q_{n}= n (\log n)^{-2(1-\epsilon)},$ the proof of $(b)$ is completed. 
Using a similar argument as in the proof of Lemma \ref{3lemma}, the proof of $(c)$ is completed.
\begin{Rem}Using a similar argument as in Theorems \ref{hecketheorem} and \ref{mainthm},
 for any ergodic dynamical system $(X, \mathcal{A}, \nu, \tau)$ and any $f$ in $L^p(\nu)$ where $p \in (3/2, \infty),$
$$\lim_{n\rightarrow \infty} \frac{1}{\sum_{k=1}^{n} \alpha(k)} \sum_{k=1}^{n} \alpha(k)f\left(\tau^{k}(x)\right)$$ converges $\nu$- almost everywhere. For this case, one only needs to replace $ \psi_{n,q}(x), \omega_{n,q}(x), T_{n}(x)$ with 
$$ \frac{G_{q}(n)}{\sum_{m=1}^{n} \alpha(m)} \sum_{m=1}^{n} e(mx), \;\;\frac{G_{q}(n)}{\sum_{m=1}^{n} \alpha(m)} \sum_{m=1}^{n} \delta_{m}, \;\; \frac{\sum_{m=1}^{n} \alpha(m) e(mx)}{\sum_{m=1}^{n} \alpha(m)}
,$$ respectively. Since 
$$ \frac{G_{q}(n)}{\sum_{m=1}^{n} \alpha(m)} \sum_{m=1}^{n} \delta_{m}$$ is a scalar multiple of the Ces\'aro kernel, the required maximal inequality and the oscillation inequality can be derived using the same arguments as in the proof of \eqref{maximal} and  Proposition \ref{34}. 
\end{Rem}

\subsection*{Acknowledgements} The author would like to thank his advisor Professor Xiaoqing Li, for her constant support. Additionally, we extend our appreciation to the anonymous referee for providing us with valuable suggestions. 
\bibliographystyle{plain}   % this means that the order of references
			    % is dtermined by the order in which the
			    % \cite and \nocite commands appear
\bibliography{over}  % list here all the bibliogres that
			     % you need. 

\end{document}